\documentclass[twoside, 12pt]{amsart}
\usepackage[letterpaper,hmargin=1.25in,vmargin=1.5in]{geometry}
\usepackage{amscd}
\usepackage{verbatim}
\usepackage{color}
\usepackage{comment}

\input xy
\xyoption{all}

\usepackage{amssymb}
\usepackage{hyperref}

\DeclareMathAlphabet{\cat}{OT1}{cmss}{m}{sl}

\newtheorem*{theorem*}{Theorem}
\newtheorem{theorem}{Theorem}[section]

\newtheorem{proposition}[theorem]{Proposition}
\newtheorem{lemma}[theorem]{Lemma}
\newtheorem{corollary}[theorem]{Corollary}

\theoremstyle{definition}
\newtheorem{remark}[theorem]{Remark}



\newcommand{\xra}{\xrightarrow}

\newcommand{\tens}{\otimes}

\newcommand{\gmu}{\boldsymbol{\mu}}




\newcommand{\CH}{\operatorname{CH}}

\renewcommand{\Im}{\operatorname{Im}}

\newcommand{\Ker}{\operatorname{Ker}}

\newcommand{\Pic}{\operatorname{Pic}}

\newcommand{\ind}{\operatorname{\hspace{0.3mm}ind}}
\newcommand{\ch}{\operatorname{char}}
\newcommand{\Inv}{\operatorname{Inv}}
\newcommand{\res}{\operatorname{res}}

\newcommand{\Br}{\operatorname{Br}}
\newcommand{\Spec}{\operatorname{Spec}}

\newcommand{\SB}{\operatorname{SB}}

\newcommand{\gPGL}{\operatorname{\mathbf{PGL}}}
\newcommand{\gSL}{\operatorname{\mathbf{SL}}}

\newcommand{\gSO}{\operatorname{\mathbf{SO}}}
\newcommand{\gGL}{\operatorname{\mathbf{GL}}}
\newcommand{\gm}{\operatorname{\mathbb{G}}_m}

\newcommand{\tors}{\operatorname{-\cat{torsors}}}
\newcommand{\rank}{\operatorname{rank}}

\newcommand{\Sdec}{\operatorname{Sdec}}
\newcommand{\Dec}{\operatorname{Dec}}
\newcommand{\torsion}{\operatorname{tors}}
\newcommand{\norm}{\operatorname{norm}}


\renewcommand{\P}{\mathbb{P}}
\newcommand{\Z}{\mathbb{Z}}

\newcommand{\Q}{\mathbb{Q}}




\title[Chow groups of products of Severi-Brauer varieties and invariants] 
{Chow groups of products of Severi-Brauer varieties and invariants of degree $3$}

\author
[S. Baek] {Sanghoon Baek}

\address{Department of Mathematical Sciences, KAIST, 291 Daehak-ro, Yuseong-gu, Daejeon, 305-701, Republic of Korea}

\email {sanghoonbaek@kaist.ac.kr}

\begin{document}

\maketitle

\begin{abstract}
We study the semi-decomposable invariants of a split semisimple group and their extension to a split reductive group by using the torsion in the codimension $2$ Chow groups of a product of Severi-Brauer varieties. In particular, for any $n\geq 2$ we completely determine the degree $3$ invariants of a split semisimple group, the quotient of $(\gSL_{2})^{n}$ by its maximal central subgroup, as well as of the corresponding split reductive group. We also provide an example illustrating that a modification of our method can be applied to find the semi-decomposable invariants of a split semisimple group of type A.
\end{abstract}


\section{Introduction}\label{intro}

Let $G$ be a linear algebraic group over a field $F$. The notion of a cohomological invariant of $G$ was introduced by Serre \cite{GMS} as follows. Consider the functor $G\tors$ taking a field $K/F$ to the set $G\tors(K)$ of isomorphism classes of $G$-torsors over $\Spec(K)$. A degree $d$ (cohomological) \emph{invariant} of $G$ with values in a cohomology group $H^{d}(K):=H^{d}(K, C)$ for some Galois module $C$ is a morphism of functors $G\tors\to H^{d}$. All the invariants of degree $d$ of $G$ form a group $\Inv^{d}(G)$. We refer to \cite{GMS} and \cite{Skip} for various examples and general discussion of invariants.

In this paper, we work with the Galois module $C=\Q/\Z(d-1)$, i.e., the Galois cohomology functor $H^{d}(-):=H^{d}(-,\Q/\Z(d-1))$, where $\Q/\Z(d-1):=\bigoplus_{p} \varinjlim_{n} \gmu_{p^{n}}^{\tens d-1}$ if $\ch(F)\neq p$ and $\bigoplus_{p} \varinjlim_{n} W_{n}\Omega^{d-1}_{\log}[1-d]$ otherwise ($W_{n}\Omega^{d-1}_{\log}$ is the logarithmic de Rham-Witt sheaf). In particular, one has $H^{2}(K)=\Br(K)$.

From now on we focus on degree $3$ invariants. The group $\Inv^{3}(G)$ was determined by Rost in the case where $G$ is simply connected quasi-simple \cite{GMS}. In \cite{MerDeg} Mekurjev constructed an exact sequence which generalize Rost's result to an arbitrary semisimple group. In particular, when $G$ is an adjoint group of inner type the group $\Inv^{3}(G)$ was computed by means of \emph{decomposable} and \emph{indecomposable} invariants.

Recently, a new type of degree $3$ invariant of a split semisimple group, called a \emph{semi-decomposable} invariant was introduced by Merkurjev-Neshitov-Zainoulline \cite{MNZ}. This invariant is locally decomposable, so any decomposable invariant is semi-decomposable. In their paper, authors established a relation between nontrivial semi-decomposable invariants and the torsion in the codimension $2$ Chow groups of a generic flag variety and showed that there is no nontrivial semi-decomposable invariant of a split simple group in \cite[Theorem]{MNZ}.

The first and, so far, the only example of nontrivial semi-decomposable invariants comes from a semisimple group $\gSO_{4}\simeq (\gSL_{2}\times \gSL_{2})/\gmu$, where $\gmu=\{(\lambda_{1}, \lambda_{2})\in \gmu_{2}\times \gmu_{2}\,|\, \lambda_{1}\lambda_{2}=1\}$; see \cite[Example 3.1]{MNZ}. Indeed, this invariant is given by $\phi:=a\langle\langle b, c\rangle\rangle\mapsto (a)\cup [b, c]$, where $\phi$ is a $4$-dimensional quadratic form with trivial discriminant over a field extension $K/F$ and $[b, c]$ is the class of a quaternion algebra in the Brauer group $\Br(K)$; see \cite[Example 20.3]{GMS}. Since the class $[b, c]$ is the image of $\phi$ under the Hasse-Witt invariant, the invariant is semi-decomposable. On the other hand, it is not decomposable as it is nontrivial over an algebraic closure of $F$.

In the present paper, we completely determine the degree $3$ invariants of split semisimple groups $(\gSL_{2})^{n}/\gmu$ for all $n\geq 2$ (see Theorem below), which generalize the above example (observe that there is no indecomposable degree $3$ invariant of $\gSO_{n}$ for $n\geq 5$ \cite[\S 19, 20]{GMS}). Furthermore, we also determine the degree $3$ invariants of split reductive groups $(\gGL_{2})^{n}/\gmu$ for any $n\geq 2$ and provide an explicit description of all semi-decomposable invariants. Namely, our main result reads as follows.

\begin{theorem*}\label{mainthm}
Let $\gmu=\{(\lambda_{1}, \ldots, \lambda_{n})\in (\gmu_{2})^{n}\,|\, \lambda_{1}\cdots \lambda_{n}=1\}$ be a central subgroup of $(\gGL_{2})^{n}$ over a field $F$ and let $G=(\gGL_{2})^{n}/\gmu$ and $H=(\gSL_{2})^{n}/\gmu$ be factor groups. Then, for any $n\geq 2$ the group $\Inv^{3}(H)_{\ind}$ of indecomposable invariants of $H$ is cyclic of order $2$ containing the subgroups $\Inv^{3}(G)_{\ind}=\Z/2\Z$ if $n\geq 3$ and $0$ otherwise,
\[\frac{\Sdec(H)}{\Dec(H)}=
\begin{cases} \Z/2\Z  & \text{ if } 2\leq n\leq 4,\\
			0       & \text{ otherwise, }
\end{cases}
\text{ and }\quad
\frac{\Sdec(G)}{\Dec(G)}=
\begin{cases} \Z/2\Z  & \text{ if } 3\leq n\leq 4,\\
			0       & \text{ otherwise, }
\end{cases}
\]
i.e., the semi-decomposable invariants of $H$ $($respectively, $G$$)$ coincide with the decomposable invariants if $n\geq 5$ $($respectively, $n=2$ or $n\geq 5$$)$.
Moreover, if $\operatorname{char}(F)\neq 2$, then the semi-decomposable invariant of $G$ $($but not decomposable$)$ is given by
\[\begin{cases}(Q_{1}, Q_{2}, Q_{3})\mapsto [Q_{1}]\cup (a)+[Q_{2}]\cup (-1) & \text{ if }n=3,\\
(Q_{1}, Q_{2}, Q_{3}, Q_{4})\mapsto [Q_{1}]\cup (b)+[Q_{2}]\cup (-b)+[Q_{3}]\cup (-1) & \text{ if }n=4,
\end{cases}
\]
where $Q_{1},\ldots, Q_{4}$ are quaternions over a field extension $K/F$ and $a, b\in K^{\times}$.
\end{theorem*}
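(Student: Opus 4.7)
The strategy combines two tools: the main theorem of \cite{MNZ} that identifies $\Sdec(H)/\Dec(H)$ with a quotient of the torsion in $\CH^{2}$ of a generic twisted flag variety, and Merkurjev's exact sequence from \cite{MerDeg} that computes $\Inv^{3}(H)_{\ind}$ via the center of the simply connected cover. The geometric input is an explicit description of the codimension-$2$ Chow group of a product of Severi-Brauer conics whose Brauer classes sum to zero.

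First I would identify the versal torsor. A character computation shows that an $H$-torsor over $K/F$ is an ordered $n$-tuple of quaternion algebras $(Q_{1},\ldots,Q_{n})$ with $[Q_{1}]+\cdots+[Q_{n}]=0$ in $\Br(K)$, and a $G$-torsor adds toral data that does not affect the Severi-Brauer side. Writing $D_{i}$ for the generic quaternion algebras over the versal base, the relevant flag variety reduces to $X=\SB(D_{1})\times\cdots\times\SB(D_{n})$, a product of conics. Over a separable closure $\CH^{2}(\bar X)$ is free on monomials $h_{i}h_{j}$, and descent of these classes to $K$ is controlled by the single Brauer relation $\sum[D_{i}]=0$ together with the symmetric pairing given by cup product on $\bigoplus\Z/2\cdot[D_{i}]$.

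Next, analyzing this pairing gives the torsion in $\CH^{2}(X)$ modulo decomposable contributions. The classes $h_{i}^{2}$ are the only candidates for nontrivial torsion, and after quotienting by $\Dec$ the relation $\sum[D_{i}]=0$ produces at most one surviving $\Z/2$-class. A direct Karpenko-style computation should show that this class is detected when $n\leq 4$ but collapses into $\Dec$ for $n\geq 5$, because for large $n$ the relation can be rewritten as a sum of three-term sub-relations each of which is already decomposable. The extra vanishing for $G$ at $n=2$ arises because the additional toral generators on the $\gGL_{2}$-flag variety absorb the one surviving class. Translating through \cite{MNZ} gives the asserted values of $\Sdec/\Dec$. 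For the indecomposable part I would feed the central isogeny $1\to\gmu\to(\gSL_{2})^{n}\to H\to 1$ into the exact sequence of \cite{MerDeg}: the Rost invariant of $(\gSL_{2})^{n}$ is the sum of the individual Rost invariants, and dualizing the relation imposed by $\gmu$ yields $\Inv^{3}(H)_{\ind}\simeq\Z/2$ for every $n\geq 2$; the same computation with $\gGL_{2}$ accounts for the vanishing of $\Inv^{3}(G)_{\ind}$ at $n=2$ alone.

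Finally, for $n=3,4$ I would verify directly that the two explicit formulas are well-defined modulo $\sum[Q_{i}]=0$ (using the bilinearity of cup product and the relation), non-decomposable (by specializing to a torsor in which a single $Q_{i}$ becomes trivial, where a genuinely decomposable invariant would vanish but the formula does not), and locally decomposable (because over each discrete valuation one quaternion summand acquires a Hasse-Witt-type slot making the expression decomposable, exactly as in the $n=2$ example of \cite[Example~3.1]{MNZ}). The main obstacle will be the Chow group computation, specifically pinning down the sharp transition at $n=5$ for $H$ and the separate transition at $n=3$ for $G$; both require careful bookkeeping of the $\Z/2$-valued symmetric pairing on the Brauer classes $[D_{i}]$ together with a clean comparison with Karpenko's description of $\CH^{*}$ of products of Severi-Brauer varieties.
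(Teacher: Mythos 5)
Your high-level skeleton is the same as the paper's (the MNZ exact sequence, Merkurjev's computation of $\Inv^{3}(H)_{\ind}$ via $S^{2}(T^{*})^{W}$ and the Chern class map, reduction of the generic variety to a product of conics, and explicit Arason-type formulas), but the central Chow-group step as you describe it would fail. For a product of conics the classes $h_{i}^{2}$ are zero, so they cannot carry the torsion, and the torsion in $\CH^{2}$ of the generic variety is not controlled by any $\Z/2$-valued pairing on the Brauer classes $[D_{i}]$: the paper finds it by a gamma-filtration computation in which, for $n=5$ (a product of four conics after the projective-bundle reduction), the element $4y_{i}y_{j}y_{k}$ lies in $\Gamma^{2}(X)$ but not in $\Gamma^{3}(X)$, giving $\CH^{2}(\bar{X}_{5})_{\torsion}=\Z/2\Z$, while for $n\leq 4$ Karpenko's counting formula forces the torsion to vanish. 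You also have the logic of the MNZ sequence inverted: $\Sdec(H)/\Dec(H)$ is the \emph{kernel} of $\Inv^{3}(H)_{\ind}\to\CH^{2}(\bar{X})_{\torsion}$, so you need the torsion to be \emph{zero} for $n\leq 4$ and \emph{nonzero} for $n\geq 5$; your heuristic that for large $n$ the relation splits into decomposable three-term sub-relations is not an argument (generically $\ind(Q_{i}\otimes Q_{j})=4$, so no such sub-relations exist) and points in the wrong direction. Moreover the Chow computation is carried out only at $n=5$; the paper handles $n\geq 6$ by a separate induction using the restriction $r(\alpha_{n})=\alpha_{n-1}$ of the invariant $e_{3}(\sum N_{Q_{i}})$, a step your proposal omits entirely.

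The treatment of the explicit formulas also has a gap. The parameters $a$ and $b$ in the displayed expressions are not canonical, so "verifying well-definedness modulo $\sum[Q_{i}]=0$" is not a viable starting point; and your test for non-decomposability (specializing one $Q_{i}$ to be split) is invalid, since a decomposable invariant $\beta\cup(x_{0})$ with constant $x_{0}$ need not vanish on such torsors. The paper instead \emph{defines} $\alpha_{n}$ intrinsically as $e_{3}(\sum_{i}N_{Q_{i}})$, which makes sense because $\sum[Q_{i}]=0$ puts the sum of norm forms in $I^{3}(K)$, and then derives the cup-product expressions by Pfister-form identities — for $n=4$ this requires Sivatski's chain lemma for biquaternion algebras; semi-decomposability is then immediate from the resulting formula and non-decomposability follows from nontriviality after base change to $\bar{F}$. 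Finally, to speak of $\Sdec(G)$ for the reductive group $G$ at all you need the extension of the MNZ sequence to reductive groups, i.e.\ the identity $\Sdec(G)=\Sdec(H)\cap\Inv^{3}(G)$ and the injection $\Inv^{3}(G)_{\ind}\hookrightarrow\Inv^{3}(H)_{\ind}$ (Proposition \ref{firstpropsection2}), which your remark that the toral data "does not affect the Severi-Brauer side" assumes without proof.
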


In the proof of the Theorem we compute the torsion in codimension $2$ cycles of a product of Severi-Brauer varieties associated to quaternion algebras and relate it to the torsion of the generic variety of $(\gSL_{2})^{n}/\gmu$. We then combine it with the group of indecomposable invariants of $(\gSL_{2})^{n}/\gmu$ and $(\gGL_{2})^{n}/\gmu$ using the short exact sequence in \cite[Theorem]{MNZ} and its extension to reductive groups (Proposition \ref{firstpropsection2}).

We would like to emphasize that our method developed in the proof of the theorem can be applied to find nontrivial semi-decomposable invariants of an arbitrary split semisimple group of type A: In the last section, using the same methods developed so far, we  present an example (Proposition \ref{lastprop}) of a semisimple group which has a nontrivial semi-decomposable invariant.


The paper is organized as follows. In Section \ref{semidecompandinvariant}, we extend the notion of semi-decomposable invariants of split semisimple groups to the class of split reductive groups and provide its connection to the invariants of semisimple groups. In Sections \ref{compdegreeGL} and \ref{sectionfour}, we determine the indecomposable invariants of our groups in Theorem and find all their semi-decomposable invariants. In Section \ref{sectionfivech}, we compute the torsion in $\CH^{2}$ on the product of Severi-Brauer varieties associated to the generic variety. In Section \ref{sectionforpf}, we combine all the results from the previous sections and prove our main result. In the last section, we discuss applications of our method.

\section{Semi-decomposable invariants of a split reductive group}\label{semidecompandinvariant}

Let $G$ be a linear algebraic group over a field $F$. An invariant $\alpha\in \Inv^{d}(G)$ is called \emph{normalized} if $\alpha(Z)=0$ for the trivial $G$-torsor $Z$, and we write $\Inv^{d}(G)_{\norm}$ for the subgroup of normalized invariants in $\Inv^{d}(G)$. Then, we have an isomorphism $\Inv^{d}(G)\simeq \Inv^{d}(G)_{\norm}\oplus H^{d}(F)$.

If $d=1$, then $\Inv^{1}(G)_{\operatorname{norm}}$ is trivial for a connected group $G$. For $d=2$, it was shown in \cite[Theorem 2.4]{BM} that the group $\Inv^{2}(G)_{\norm}$ is isomorphic to the Picard group $\Pic(G)$ for a smooth connected group $G$. In particular, for a split reductive group $G$ and its derived subgroup $H$ one has
\begin{equation}\label{invariantdegreetwo}
\Inv^{2}(G)_{\norm}\simeq \Inv^{2}(H)_{\norm}\simeq M^{*}, \text{ thus }\Inv^{2}(G)\simeq \Inv^{2}(H),
\end{equation}
where $M^{*}$ is the character group of the kernel of the universal covering morphism of $H$ (see \cite[Example 2.1]{MerUnramified}). Indeed, the isomorphism $M^{*}\stackrel{\sim}{\to} \Inv^{2}(H)_{\norm}$ is given by the composition of the connecting morphism $\delta$ with the induced morphism $\chi_{*}: H^{2}(K, M)\to H^{2}(K, \gm)=\Br(K)$:
\begin{equation}\label{degreetwonormalized}
H\tors(K)\stackrel{\delta}{\to}H^{2}(K, M)\stackrel{\chi_{*}}{\to} \Br(K)
\end{equation}
for each character $\chi\in M^{*}(K)$ over a field extension $K/F$.

For $d=3$, the group $\Inv^{3}(G)_{\norm}$ is finite cyclic with canonical generator (called the Rost invariant) if $G$ is simply connected quasi-simple \cite[Theorem 9.11]{GMS}. For an arbitrary semisimple group $G$, the group $\Inv^{3}(G)_{\norm}$ is determined by means of an exact sequence of five terms \cite[Theorem]{MerDeg}. In particular, it was calculated for all adjoint groups of inner type \cite{MerDeg} and for the remaining split simple groups \cite{BR}.

Let $G$ be a split reductive group over $F$. A normalized invariant $\alpha\in \Inv^{3}(G)_{\norm}$ is called \emph{decomposable} if it is given by a cup product of an invariant in degree $2$ with a constant invariant in degree $1$ \cite[\S 3]{MerDeg}. The subgroup of decomposable invariants will be denoted by $\Dec(G)$. Observe that $\alpha\in \Dec(G)$ if and only if $\alpha_{\bar{F}}$ is trivial. We shall write $\Inv^{3}(G)_{\ind}=\Inv^{3}(G)_{\norm}/\Dec(G)$ and call $\Inv^{3}(G)_{\ind}$ the \emph{indecomposable} invariants \cite[\S 3]{MerDeg}.

Let $H$ be a split semisimple group over $F$. In \cite{MNZ} a \emph{semi-decomposable} invariant of $H$ was introduced. We write $\Sdec(H)$ for the subgroup of semi-decomposable invaraints. This invariant can be viewed as a locally decomposable invariant so that $\Dec(H)\subseteq\Sdec(H)\subseteq \Inv^{3}(H)_{\norm}$.

Consider a generically free representation $V$ of $H$, i.e., there exists a nonempty $H$-invariant open subset $U\subseteq V$ such that $U\to U/H$ is a $H$-torsor. Then, the generic fiber $\bar{U}\to \Spec F(U/H)$ of $U\to U/H$ is a \emph{versal} $H$-torsor \cite{GMS}. Let $B$ be a Borel subgroup of $H$ and let $\bar{X}$ be the associated flag variety $\bar{U}/B$ over $\Spec F(U/H)$, which is called the \emph{generic variety} of $H$ \cite{MNZ}. Then, it was shown in \cite[Theorem]{MNZ} that there is an exact sequence
\begin{equation}\label{mainMNZ}
0\to\frac{\Sdec(H)}{\Dec(H)}\to\Inv^{3}(H)_{\ind}\to\CH^{2}(\bar{X})_{\torsion}\to 0.
\end{equation}
In particular, it was proved that $\Dec(H)=\Sdec(H)$ if $H$ is simple.

We extend the definition of semi-decomposable invariants of split semisimple groups \cite[Definition 2.9]{MNZ} to the class of split reductive groups: let $G$ be a split reductive group. A normalized invariant $\alpha$ of degree $3$ of $G$ is \emph{semi-decomposable} if for any field extension $K/F$ and any $Z\in G\tors(K)$ there exist $\beta_{i}\in \Inv^{2}(G)_{\norm}$ and $x_{i}\in K^{\times}$ such that
\[\alpha(Z)=\sum_{i}\beta_{i}(Z)\cup (x_{i}).\]

We extend the exact sequence (\ref{mainMNZ}) to the class of split reductive groups:
\begin{proposition}\label{firstpropsection2}
Let $G$ be a split reductive group over a field $F$ and let $H$ be its $($semisimple$)$ derived subgroup. Then,  there is a commutative diagram
\begin{equation*}
\xymatrix{
0 \ar@{->}[r] & \frac{\Sdec(H)}{\Dec(H)} \ar@{->}[r] & \Inv^{3}(H)_{\ind} \ar@{->}[r]  & \CH^{2}(\bar{X})_{\torsion}  \ar@{=} [d]\ar@{->}[r] & 0\\
0 \ar@{->}[r] & \frac{\Sdec(G)}{\Dec(G)} \ar@{->}[r]  \ar@{^{(}->} [u]& \Inv^{3}(G)_{\ind}\ar@{->}[r] \ar@{^{(}->} [u]& \CH^{2}(\bar{X})_{\torsion} \\
}
\end{equation*}
with the exact rows, where $\bar{X}$ is the generic variety associated to $H$.
\end{proposition}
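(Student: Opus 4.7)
The plan is to construct the bottom row by restricting invariants along the inclusion $H \hookrightarrow G$ and to deduce its exactness from that of the top row (which is \cite[Theorem]{MNZ}). The decisive structural fact is that $T := G/H$ is a split torus, since $G$ is a split reductive group with semisimple derived subgroup $H$. By Hilbert~90, $H^{1}(K, T) = 0$ for every field extension $K/F$, so the natural map $H\tors(K) \to G\tors(K)$, $Y \mapsto Y \times^{H} G$, is surjective. Combined with the isomorphism $\Inv^{2}(G)_{\norm} \simeq \Inv^{2}(H)_{\norm}$ of (\ref{invariantdegreetwo}), this is essentially all the input required.

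I would begin by introducing the restriction homomorphism $\res \colon \Inv^{d}(G) \to \Inv^{d}(H)$ defined by $\res(\alpha)(Y) := \alpha(Y \times^{H} G)$. The surjectivity of torsors noted above immediately yields that $\res$ is injective on $\Inv^{3}$, and a direct check shows that $\res(\Dec(G)) \subseteq \Dec(H)$ and $\res(\Sdec(G)) \subseteq \Sdec(H)$. This produces the two left vertical arrows of the diagram, and defining the bottom horizontal map $\Inv^{3}(G)_{\ind} \to \CH^{2}(\bar{X})_{\torsion}$ as the composition of restriction with the top horizontal map makes the right-hand square tautologically commute.

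Next I would verify that the induced maps $\Sdec(G)/\Dec(G) \hookrightarrow \Sdec(H)/\Dec(H)$ and $\Inv^{3}(G)_{\ind} \hookrightarrow \Inv^{3}(H)_{\ind}$ are injective, which in particular gives exactness of the bottom row at its leftmost term. If $\alpha \in \Inv^{3}(G)_{\norm}$ satisfies $\res(\alpha) \in \Dec(H)$, write $\res(\alpha) = \sum_{i} \beta^{H}_{i} \cup (x_{i})$ with $\beta^{H}_{i} \in \Inv^{2}(H)_{\norm}$ and $x_{i} \in F^{\times}$, lift each $\beta^{H}_{i}$ to the unique $\beta^{G}_{i} \in \Inv^{2}(G)_{\norm}$ restricting to it via (\ref{invariantdegreetwo}), and set $\gamma := \sum_{i} \beta^{G}_{i} \cup (x_{i}) \in \Dec(G)$; then $\res(\gamma) = \res(\alpha)$, so the injectivity of $\res$ on $\Inv^{3}$ forces $\alpha = \gamma \in \Dec(G)$.

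The main step, and the one where the surjectivity of the torsor map is truly exploited, is exactness of the bottom row at $\Inv^{3}(G)_{\ind}$. If $\alpha \in \Inv^{3}(G)_{\norm}$ maps to zero in $\CH^{2}(\bar{X})_{\torsion}$, then by the top row exactness $\res(\alpha) \in \Sdec(H) + \Dec(H) = \Sdec(H)$. To conclude $\alpha \in \Sdec(G)$, fix any $Z \in G\tors(K)$, choose $Y \in H\tors(K)$ with $Y \times^{H} G \cong Z$, and apply the semi-decomposability of $\res(\alpha)$ to $Y$ to obtain $\beta^{H}_{i} \in \Inv^{2}(H)_{\norm}$ and $x_{i} \in K^{\times}$ with $\res(\alpha)(Y) = \sum_{i} \beta^{H}_{i}(Y) \cup (x_{i})$; lifting each $\beta^{H}_{i}$ to $\beta^{G}_{i} \in \Inv^{2}(G)_{\norm}$ and using the identity $\beta^{H}_{i}(Y) = \beta^{G}_{i}(Y \times^{H} G) = \beta^{G}_{i}(Z)$ then yields $\alpha(Z) = \sum_{i} \beta^{G}_{i}(Z) \cup (x_{i})$, as required.
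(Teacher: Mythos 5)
Your proposal is correct and follows essentially the same route as the paper: both rest on the surjectivity of $H\tors(K)\to G\tors(K)$ (coming from $G/H$ being a split torus) to get injectivity of restriction on degree $3$ invariants, and on the isomorphism $\Inv^{2}(G)_{\norm}\simeq\Inv^{2}(H)_{\norm}$ of (\ref{invariantdegreetwo}) to lift semi-decomposable expressions from $H$ back to $G$, which is exactly the paper's proof that $\Sdec(G)=\Sdec(H)\cap\Inv^{3}(G)$. The only difference is that you spell out the injectivity of $\Inv^{3}(G)_{\ind}\to\Inv^{3}(H)_{\ind}$ directly, where the paper cites \cite[Proposition 6.1]{MerUnramified}.
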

\begin{proof}
Consider the exact sequence $1\to H\xra{\phi} G\to T\to 1$, where $T$ is a split torus induced by the embedding $\phi$. Then, for a field extension $K/F$ this yields a surjective morphism
\begin{equation}\label{surjGH}
\phi_{*}: H\tors(K)\to G\tors(K) \text{ given by } Z\mapsto \phi_{*}(Z):=(Z\times G)/H,
\end{equation}
which yields an injective morphism
\[\bar{\phi}: \Inv^{d}(G)\to \Inv^{d}(H) \text{ given by } \bar{\phi}(\alpha)(Z):=\alpha(\phi_{*}(Z))\]
for any degree $d$. Hence, we have the injectivity of the middle column (see \cite[Proposition 6.1]{MerUnramified}).

It remains to show that $\Sdec(G)=\Sdec(H)\cap \Inv^{3}(G)$. Let $\alpha\in \Sdec(G)$. Then, for $Z\in H\tors(K)$ there exist $\beta_{i}\in \Inv^{2}(G)_{\norm}$ and $x_{i}\in K^{\times}$ such that
\[\bar{\phi}(\alpha)(Z)=\alpha(\phi_{*}(Z))=\sum_{i} \beta_{i}(\phi_{*}(Z))\cup (x_{i})=\sum_{i}\bar{\phi}(\beta_{i})(Z)\cup (x_{i}),\]
thus $\bar{\phi}(\alpha)\in \Sdec(H)$. Conversely, let $\alpha\in \Inv^{3}(G)$ such that $\bar{\phi}(\alpha)\in \Sdec(H)$. It follows from (\ref{surjGH}) that for $Y\in G\tors(K)$ there exists $Z\in H\tors(K)$ such that $\phi_{*}(Z)=Y$. Therefore, we have
\begin{equation}\label{trivialproof}
\alpha(Y)=\bar{\phi}(\alpha)(Z)=\sum_{i} \beta_{i}(Z)\cup (x_{i})
\end{equation}
for some $\beta_{i}\in \Inv^{2}(G)_{\norm}$ and $x_{i}\in K^{\times}$. Since $\bar{\phi}$ is an isomorphism by (\ref{invariantdegreetwo}), there exists $\gamma_{i}\in \Inv^{2}(G)$ such that $\bar{\phi}(\gamma_{i})=\beta_{i}$, thus the rightmost term in (\ref{trivialproof}) equals to $\sum_{i} \gamma_{i}(Y)\cup (x_{i})$. Hence, $\alpha\in \Sdec(G)$.\end{proof}

\begin{remark}
In general, the map $\Inv^{3}(G)_{\ind}\to \CH^{2}(\bar{X})_{\torsion}$ in the bottom row of Proposition \ref{firstpropsection2} is not necessarily surjective. Consder the reductive group $G=\gGL_{8}/\gmu_{2}$ and its derived group $H=\gSL_{8}/\gmu_{2}$. It follows from \cite[Theorem 2.8]{BM09} that $\Inv^{3}(G)_{\ind}=0$. On the other hand, by \cite[Example 4.15]{Kar} we have $\CH^{2}(X)_{\torsion}=\Z/2\Z$. Note that by  \cite[Theorem 4.1]{BR} we also have $\Inv^{3}(H)_{\ind}=\Z/2\Z$.
\end{remark}

\section{Degree $3$ invariants of $(\gGL_{2})^{n}/\gmu$}\label{compdegreeGL}

In the present section, we give an explicit description of semi-decomposable invariants of $(\gGL_{2})^{n}/\gmu$ for $n=3, 4$, which in turn are all nontrivial semi-decomposable invariants of $(\gGL_{2})^{n}/\gmu$ for any $n\geq 2$.

For $n\geq 2$, consider a central subgroup $\gmu=\{(\lambda_{1}, \ldots, \lambda_{n})\in (\gmu_{2})^{n}\,|\, \lambda_{1}\cdots \lambda_{n}=1\}$ of $(\gGL_{2})^{n}$. An element of $(\gGL_{2})^{n}/\gmu\tors(K)$, where $K/F$ is a field extension, is the class of $n-$tuples $(Q_{1},\ldots, Q_{n})$ of quaternions over $K$ satisfying the relation $[Q_{1}]+\cdots+[Q_{n}]=0$ in $\Br(K)$.

The reduced norm form $N_{Q}=\langle\langle a, b\rangle\rangle$ of a quaternion algebra $Q=(a, b)$ over $K$ is in $I^{2}(K)$ and the image of $N_{Q}$ under $e_{2}:I^{2}(K)\to \Br_{2}(K)$ is the class $[Q]$. Therefore, the sum of norm forms $\sum_{i=1}^{n} N_{Q_{i}}$ of an element $(Q_{1},\ldots, Q_{n})$ of $(\gGL_{2})^{n}/\gmu\tors(K)$ is contained in $I^{3}(K)=\Ker(e_{2})$.

Let $\alpha_{n}$ be a degree $3$ invariant of $(\gGL_{2})^{n}/\gmu$ defined by
\begin{equation}\label{alphainvariant}
(Q_{1},\ldots, Q_{n})\mapsto e_{3}(\sum_{i=1}^{n} N_{Q_{i}}),
\end{equation}
where $e_{3}:I^{3}(K)\to H^{3}(K)$ is the Arason invariant for a field extension $K/F$ (see \cite[Example 11.2]{MerUnramified}). If $n=2$, then any element $(Q_{1}, Q_{2})\in (\gGL_{2})^{2}/\gmu\tors(K)$ satisfies $Q_{1}\simeq Q_{2}$, thus the invariant $\alpha_{2}(Q_{1}, Q_{2})=[Q_{1}]\cup (-1)$ in $H^{3}(K)$ is decomposable as it is trivial over an algebraic closure $\bar{K}$ of $K$.

\begin{lemma}\label{Gthree}
Let $(Q_{1}, Q_{2}, Q_{3})\in (\gGL_{2})^{3}/\gmu\tors(K)$. Then, the invariant $\alpha_{3}$, which can be written explicitly as $(Q_{1}, Q_{2}, Q_{3})\mapsto [Q_{1}]\cup (q)+[Q_{2}]\cup (-1)$ for some $q\in K^{\times}$, is semi-decomposable but not decomposable.
\end{lemma}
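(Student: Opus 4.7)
The plan is to compute a closed form for $\alpha_3(Q_1,Q_2,Q_3)$ and then read off both assertions of the lemma from the resulting formula. I assume $\ch(F)\neq 2$; the characteristic $2$ case is analogous.

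\emph{Step 1: explicit form.} Since $[Q_1]+[Q_2]+[Q_3]=0$ in $\Br(K)$, the biquaternion $Q_1\otimes_K Q_2$ is Brauer-equivalent to the quaternion $Q_3$, and in particular has index $\leq 2$. By Albert's common-slot theorem I can therefore write $Q_1\simeq(a,b_1)$ and $Q_2\simeq(a,b_2)$ for some $a,b_1,b_2\in K^\times$, and consequently $Q_3\simeq(a,b_1 b_2)$. In the Witt ring $W(K)$ the identity
\[\langle\langle a,b_1\rangle\rangle+\langle\langle a,b_2\rangle\rangle-\langle\langle a,b_1 b_2\rangle\rangle=\langle\langle a,b_1,b_2\rangle\rangle\]
combined with $2\langle\langle a,b_1 b_2\rangle\rangle=\langle\langle -1,a,b_1 b_2\rangle\rangle$ shows that $\sum_i N_{Q_i}=\langle\langle a,b_1,b_2\rangle\rangle+\langle\langle -1,a,b_1 b_2\rangle\rangle$ in $I^3(K)$. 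Applying $e_3$ and using the bilinearity and graded-commutativity of cup products in $H^\ast(K,\Z/2\Z)$ I then obtain
\[\alpha_3(Q_1,Q_2,Q_3)=(a)\cup(b_1)\cup(b_2)+(-1)\cup(a)\cup(b_1 b_2)=[Q_1]\cup(-b_2)+[Q_2]\cup(-1),\]
so the claimed expression in the lemma holds with $q:=-b_2\in K^\times$.

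\emph{Step 2: semi-decomposable and not decomposable.} The maps $\beta_i\colon (Q_1,Q_2,Q_3)\mapsto[Q_i]$ for $i=1,2$ define normalized degree $2$ invariants of $G=(\gGL_2)^3/\gmu$ via the isomorphism (\ref{invariantdegreetwo}): they correspond to the two generators of the character group of the kernel $M\simeq\gmu_2\times\gmu_2$ under (\ref{degreetwonormalized}). Rewriting Step 1 as $\alpha_3=\beta_1\cup(q)+\beta_2\cup(-1)$ with $q,-1\in K^\times$ exhibits $\alpha_3$ as semi-decomposable. For non-decomposability it suffices to produce a single torsor over some extension of $\bar F$ on which $\alpha_3$ is nontrivial, since $\alpha\in\Dec(G)$ iff $\alpha_{\bar F}=0$. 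I specialize to $K=\bar F(x,y,z)$ and $Q_1=(x,y),Q_2=(x,z),Q_3=(x,yz)$; these lie in $G\tors(K)$ as $[Q_1]+[Q_2]+[Q_3]=0$, and since $-1$ is a square in $\bar F\subseteq K$, Step 1 collapses to $\alpha_3(Q_1,Q_2,Q_3)=(x)\cup(y)\cup(z)\in H^3(K)$. This symbol is nonzero, for instance because $\langle\langle x,y,z\rangle\rangle$ is anisotropic over the purely transcendental extension $\bar F(x,y,z)$. Hence $\alpha_3|_{\bar F}\neq 0$ and $\alpha_3\notin\Dec(G)$.

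The main technical step is the common-slot reduction in Step 1: once all three quaternions share a common first slot, the required Witt-ring identities are standard Pfister-form manipulations, and both conclusions follow essentially by inspection from the resulting closed form.
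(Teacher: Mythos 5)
Your argument is correct and follows essentially the same route as the paper: the common-slot presentation $Q_1=(a,b_1)$, $Q_2=(a,b_2)$, $Q_3=(a,b_1b_2)$, the Pfister-form identity in $W(K)$ to rewrite $\sum_i N_{Q_i}$ as a $3$-fold Pfister form plus $\langle\langle -1,a,b_1b_2\rangle\rangle$, and then reading off semi-decomposability from the closed form and non-decomposability from nontriviality over $\bar F$. Your only addition is the explicit torsor $(x,y),(x,z),(x,yz)$ over $\bar F(x,y,z)$ witnessing $\alpha_3|_{\bar F}\neq 0$, which the paper leaves implicit.
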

\begin{proof}
First, observe that in the Witt ring $W(K)$ one has
\begin{equation}\label{twofold}
\langle\langle x, y\rangle\rangle + \langle\langle x, z\rangle\rangle=\langle\langle x, y, z\rangle\rangle+\langle\langle x, yz\rangle\rangle
\end{equation}
for $x, y, z\in K^{\times}$.

Since $Q_{1}\tens Q_{2}$ is not a division algebra, they have a common splitting quadratic extension. Therefore, $Q_{1}=(a, b)$ and $Q_{2}=(a, c)$ for some $a, b, c\in K^{\times}$. It follows from the relation $[Q_{1}]+[Q_{2}]+[Q_{3}]=0$ that $Q_{3}=(a, bc)$.

Applying (\ref{twofold}) to the sum of norm forms, one obtain
\[\sum_{i=1}^{3} N_{Q_{i}}=\langle\langle a, b\rangle\rangle+ \langle\langle a, c\rangle\rangle+ \langle\langle a, bc\rangle\rangle=\langle\langle a, b, c\rangle\rangle+ \langle\langle a, bc\rangle\rangle+ \langle\langle a, bc\rangle\rangle\]
in $W(K)$. As $2\cdot \langle\langle a, bc\rangle\rangle=\langle\langle a, bc, -1\rangle\rangle$, we have
\[\alpha_{3}(Q_{1}, Q_{2}, Q_{3})=[Q_{1}]\cup (-c)+[Q_{2}]\cup (-1)\quad \text{ in }H^{3}(K),\]
which is not decomposable as it is nontrivial over $\bar{K}$. Obviously, this invariant is semi-decomposable.
\end{proof}

\begin{remark}
Lemma \ref{Gthree} shows that the invariant $\alpha_{n}$ is nontrivial for $n\geq 3$.
\end{remark}

\begin{proposition}\label{invariantalphafour}
Let $\operatorname{char}(F)\neq 2$. The invariant $\alpha_{4}$ of $(\gGL_{2})^{4}/\gmu$, which can be written explicitly as $(Q_{1}, Q_{2}, Q_{3}, Q_{4})\mapsto [Q_{1}]\cup (q)+[Q_{2}]\cup (-q)+[Q_{3}]\cup (-1) $ for some $q\in K^{\times}$, is semi-decomposable but not decomposable.
\end{proposition}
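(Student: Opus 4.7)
The plan is to mirror the proof of Lemma \ref{Gthree}, applying its common-slot trick twice in succession. First I would reduce to the situation in which $Q_{1}$ and $Q_{2}$ share a common slot: arguing as in Lemma \ref{Gthree}, I assume (after a possible relabeling of the indices) that $Q_{1}\tens Q_{2}$ is not division, so that $Q_{1}=(a,b)$ and $Q_{2}=(a,c)$ for some $a,b,c\in K^{\times}$. The Brauer relation $\sum_{i=1}^{4}[Q_{i}]=0$ then yields $[(a,bc)]+[Q_{3}]+[Q_{4}]=0$, and applying Lemma \ref{Gthree} to this derived $3$-tuple produces a common slot $e$ with presentations $(a,bc)=(e,f)$, $Q_{3}=(e,g)$, $Q_{4}=(e,fg)$ for some $e,f,g\in K^{\times}$.

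With these presentations in hand, I would compute $\sum_{i=1}^{4}N_{Q_{i}}$ in $W(K)$ by applying identity (\ref{twofold}) separately to the pairs $N_{Q_{1}}+N_{Q_{2}}$ and $N_{Q_{3}}+N_{Q_{4}}$. The two $2$-fold Pfister remainders $\langle\langle a,bc\rangle\rangle$ and $\langle\langle e,f\rangle\rangle$ agree in $W(K)$ since $(a,bc)\simeq (e,f)$ as quaternion algebras, so they combine into a single $3$-fold Pfister form $\langle\langle -1,a,bc\rangle\rangle$. Applying $e_{3}$ and expanding via the Brauer-class identities $(e)(f)=(a)(bc)=[Q_{1}]+[Q_{2}]$ together with $(g)(g)=(g)(-1)$, I would collect terms into
\[\alpha_{4}(Q_{1},Q_{2},Q_{3},Q_{4})=[Q_{1}]\cup(-cg)+[Q_{2}]\cup(-g)+[Q_{3}]\cup(-1).\]
The Steinberg-type vanishing $[Q_{2}]\cup(-c)=(a)(c)(-c)=0$ then lets me absorb the asymmetry and rewrite this as $[Q_{1}]\cup(q)+[Q_{2}]\cup(-q)+[Q_{3}]\cup(-1)$ with $q=-cg$, establishing semi-decomposability.

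For nondecomposability, I would exhibit a specific $4$-tuple over $K=\bar{F}(a,b,c,g)$, such as $Q_{1}=(a,b)$, $Q_{2}=(a,c)$, $Q_{3}=(a,g)$, $Q_{4}=(a,bcg)$, in analogy with the check implicit in Lemma \ref{Gthree}. Over $K$ the class $(-1)$ vanishes, but the residual symbol contributions from the computation above remain nontrivial in $H^{3}(K,\Z/2\Z)$, which can be verified using the known structure of mod-$2$ Milnor $K$-theory of a purely transcendental extension of $\bar{F}$. Since every decomposable invariant vanishes on extensions of $\bar{F}$, this shows $\alpha_{4}$ is not decomposable.

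I expect the main obstacle to be the very first step, the reduction to a pair with a common slot. In the $3$-tuple setting of Lemma \ref{Gthree} the identity $[Q_{1}]+[Q_{2}]=-[Q_{3}]$ automatically forces $Q_{1}\tens Q_{2}$ to have index at most $2$, but in the $4$-tuple case $Q_{1}\tens Q_{2}$ may a priori be a division bi-quaternion of index $4$. Handling this requires either a supplementary structural argument showing that some pair among $\{Q_{1},Q_{2},Q_{3},Q_{4}\}$ always shares a common slot, or a careful case analysis followed by a relabeling, and this is where I anticipate most of the technical work beyond the routine Pfister-identity manipulations.
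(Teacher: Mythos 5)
There is a genuine gap, and it is exactly at the point you flag as the ``main obstacle'': the reduction to a pair with a common slot is not available. For a versal torsor of $(\gGL_{2})^{4}/\gmu$ one has $\ind(Q_{i}\tens Q_{j})=4$ for \emph{every} pair $i\neq j$ (this is precisely the index configuration used in Section \ref{sectionfivech}), so no relabeling produces a non-division product $Q_{i}\tens Q_{j}$, and there is no ``supplementary structural argument'' showing some pair shares a common slot --- the assertion is simply false generically. Since an invariant is determined by its value on a versal torsor, the case your computation actually covers (some $Q_{i}\tens Q_{j}$ non-division) is a proper degenerate subcase, and the Witt-ring manipulation that follows does not establish the formula for $\alpha_{4}$ in general. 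The missing idea is Sivatski's chain lemma for biquaternion algebras \cite[Proposition 1]{Siv}: using $[Q_{1}]+[Q_{2}]=[Q_{3}]+[Q_{4}]$, it supplies scalars $x,y,z$ and modified presentations $Q_{1}'=(a,bx)$, $Q_{2}'=(c,dx)$, $Q_{3}'=(e,fy)$, $Q_{4}'=(g,hy)$ with $[ac,x]=[eg,y]=[ac,z]=0$ interpolating between the two pairs; the paper then evaluates $\alpha_{4}$ on the three quadruples $(Q_{1},Q_{2},Q_{1}',Q_{2}')$, $(Q_{1}',Q_{2}',Q_{3}',Q_{4}')$, $(Q_{3},Q_{4},Q_{3}',Q_{4}')$ and adds the results. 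Without this (or an equivalent substitute) your argument does not get off the ground for the generic torsor, which is the only case that matters for both the semi-decomposability formula and the non-decomposability claim.

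A secondary, smaller issue: even in the degenerate case you treat, your final expression $[Q_{1}]\cup(-cg)+[Q_{2}]\cup(-g)+[Q_{3}]\cup(-1)$ is reconciled with the symmetric form $[Q_{1}]\cup(q)+[Q_{2}]\cup(-q)+[Q_{3}]\cup(-1)$ via the relation $[Q_{2}]\cup(-c)=(a)(c)(-c)=0$, which again leans on the presentation $Q_{2}=(a,c)$ that is unavailable in general. The non-decomposability check at the end is fine in spirit (nontriviality over $\bar F$ suffices), but it too should be run against the formula established for the versal torsor rather than for a specially chosen common-slot family.
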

\begin{proof}
Let $Q_{1}=(a, b)$, $Q_{2}=(c, d)$, $Q_{3}=(e, f)$, $Q_{4}=(g, h)$ be quaternions over $K$ such that $[Q_{1}]+[Q_{2}]=[Q_{3}]+[Q_{4}]$ in $\Br(K)$. Then, by the chain lemma for biquaternion algebras \cite[Proposition 1]{Siv} one can find $x, y, z\in K$ such that
\begin{equation}\label{simpleaceg}
[ac, x]=[eg, y]=[ac, z]=0
\end{equation}
and
\[[Q_{3}']=[Q_{1}']+[a, z],\quad [Q_{4}']=[Q_{2}']+[c, z]\]
in $\Br(K)$, where $Q_{1}'=(a, bx),\, Q_{2}'=(c, dx),\, Q_{3}'=(e, fy),\, Q_{4}'=(g, hy)$ are quaternion algebras over $K$. Hence,
\begin{equation}\label{qoneqtwoqoneprime}
[Q_{1}]+[Q_{2}]=[Q_{1}']+[Q_{2}'] \text{ and } [Q_{3}]+[Q_{4}]=[Q_{3}']+[Q_{4}'] \text{ in } \Br(K).
\end{equation}

Applying (\ref{twofold}), we have
\begin{align}
N_{Q_{1}}+N_{Q_{2}}+N_{Q_{1}'}+N_{Q_{2}'}&=  \langle\langle a, b\rangle\rangle+\langle\langle c, d\rangle\rangle+\langle\langle a, bx\rangle\rangle+\langle\langle c, dx\rangle\rangle  \nonumber\\
&=  \langle\langle a, x\rangle\rangle+\langle\langle a, b, bx\rangle\rangle+\langle\langle c, x\rangle\rangle+\langle\langle c, d, dx\rangle\rangle \nonumber\\
&= \langle\langle x, ac\rangle\rangle+\langle\langle a, c, x\rangle\rangle+\langle\langle a, b, bx\rangle\rangle+\langle\langle c, d, dx\rangle\rangle\label{sumoffour}
\end{align}
in $W(K)$. By using the relations (\ref{simpleaceg}),
\[\langle\langle a, b, bx\rangle\rangle=\langle\langle a, b, x\rangle\rangle+\langle\langle a, b, -1\rangle\rangle \text{, and } \langle\langle c, d, dx\rangle\rangle=\langle\langle c, d, -x\rangle\rangle,\]
we see that the sum (\ref{sumoffour}) equals
\begin{equation}\label{simpleacegggg}
\langle\langle a, b, x\rangle\rangle+\langle\langle c, d, -x\rangle\rangle+\langle\langle a, c, x\rangle\rangle+\langle\langle a, b, -1\rangle\rangle
\end{equation}
in $W(K)$. By (\ref{simpleaceg}) and $\langle\langle a, a\rangle\rangle=\langle\langle a, -1\rangle\rangle$, one has $\langle\langle a, c, x\rangle\rangle=\langle\langle a, x, -1\rangle\rangle$ thus by applying (\ref{twofold}) one obtain
\begin{equation}\label{simpleacegg}
\langle\langle a, c, x\rangle\rangle+\langle\langle a, b, -1\rangle\rangle=\langle\langle a, bx, -1\rangle\rangle+\langle\langle a, b, x, -1\rangle\rangle.
\end{equation}
in $W(K)$. It follows from (\ref{simpleacegggg}) and (\ref{simpleacegg}) that
\begin{equation}\label{Qonetwo}
\alpha_{4}(Q_{1}, Q_{2}, Q_{1}', Q_{2}')=[Q_{1}]\cup (x)+[Q_{2}]\cup (-x)+[Q_{1}']\cup (-1)
\end{equation}
in $H^{3}(K)$ as a $4$-fold Pfister form vanishes under $e_{3}$.

Similarly, one has
\begin{equation}\label{Qonetwot}
\alpha_{4}(Q_{1}', Q_{2}', Q_{3}', Q_{4}')=[Q_{1}']\cup (z)+[Q_{2}']\cup (-z)+[Q_{3}']\cup (-1)
\end{equation}
and
\begin{equation}\label{Qonetwott}
\alpha_{4}(Q_{3}, Q_{4}, Q_{3}', Q_{4}')=[Q_{3}]\cup (y)+[Q_{4}]\cup (-y)+[Q_{3}']\cup (-1)
\end{equation}
in $H^{3}(K)$.

Combining (\ref{Qonetwo}) with (\ref{Qonetwot}) and (\ref{Qonetwott}) and using the relation (\ref{qoneqtwoqoneprime}) we obtain
\begin{align}
\alpha_{4}(Q_{1}, Q_{2}, Q_{3}, Q_{4})&=\alpha_{4}(Q_{1}, Q_{2}, Q_{1}', Q_{2}')+\alpha_{4}(Q_{1}', Q_{2}', Q_{3}', Q_{4}')+\alpha_{4}(Q_{3}, Q_{4}, Q_{3}', Q_{4}')\nonumber\\
&=[Q_{1}]\cup (x)+ [Q_{2}]\cup (-x)+([Q_{1}]+[Q_{2}])\cup (-z)+[Q_{3}']\cup (1)\nonumber\\
&+[Q_{3}]\cup (y)+[Q_{4}]\cup(-y) \label{sumoffourfff}
\end{align}
in $H^{3}(K)$. Since $[Q_{3}']\cup (1)=0$ in $H^{3}(K)$ and $[Q_{4}]=[Q_{1}]+[Q_{2}]+[Q_{3}]$ in $\Br(K)$, it follows from (\ref{sumoffourfff}) that
\[\alpha_{4}(Q_{1}, Q_{2}, Q_{3}, Q_{4})=[Q_{1}]\cup (xyz)+ [Q_{2}]\cup (-xyz)+[Q_{3}]\cup (-1) \text{ in } H^{3}(K),\]
which is obviously semi-decomposable. This invariant is not decomposable as it is nontrivial over $\bar{K}$.
\end{proof}

\begin{corollary}
Assume that $F$ is an algebraically closed field. Then, the invariant $\alpha_{4}$ vanishes over a field extension $K/F$ if and only if the element $q\in K^{\times}$ is contained in the group of similarity factors of an Albert form associated to $Q_{1}\tens Q_{2}$.
\end{corollary}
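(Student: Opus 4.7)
The plan is to exploit the hypothesis that $F$ is algebraically closed to simplify $\alpha_{4}$, and then reduce the vanishing question to a Witt-theoretic statement handled by the Arason-Pfister Hauptsatz. Since $-1$ is a square in any extension $K/F$, we have $(-1)=0$ in $H^{1}(K,\gmu_{2})$, so the explicit formula of Proposition \ref{invariantalphafour} collapses to
\[
\alpha_{4}(Q_{1},Q_{2},Q_{3},Q_{4})=\bigl([Q_{1}]+[Q_{2}]\bigr)\cup(q)=[A]\cup(q)\quad\text{in }H^{3}(K),
\]
where $A=Q_{1}\tens Q_{2}$. The task then becomes showing that $[A]\cup(q)=0$ in $H^{3}(K)$ if and only if $q$ is a similarity factor of an Albert form $\phi$ of $A$.

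For the translation to quadratic forms, note that $\phi\in I^{2}(K)$ with $e_{2}(\phi)=[A]$, so the product $\langle\langle q\rangle\rangle\cdot\phi$ lies in $I^{3}(K)$ and satisfies $e_{3}(\langle\langle q\rangle\rangle\cdot\phi)=(q)\cup[A]$. The easy direction is immediate: if $q\in G(\phi)$ then $\phi\cong q\phi$, so $\langle\langle q\rangle\rangle\cdot\phi=0$ in $W(K)$ and hence $\alpha_{4}=e_{3}(0)=0$. For the reverse direction, the vanishing of $[A]\cup(q)$ places $\langle\langle q\rangle\rangle\cdot\phi$ in the kernel of $e_{3}\colon I^{3}(K)\to H^{3}(K)$, which equals $I^{4}(K)$ by Rost's theorem. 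The decisive input is then the Arason-Pfister Hauptsatz: since $\phi$ has dimension $6$, the form $\langle\langle q\rangle\rangle\cdot\phi$ has dimension $12<2^{4}$ and must therefore be hyperbolic, i.e., $\phi\equiv q\phi$ in $W(K)$. Since $\phi$ and $q\phi$ have equal dimension, this Witt equivalence upgrades to an honest isometry $\phi\cong q\phi$, giving $q\in G(\phi)$.

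I do not anticipate serious obstacles. The only subtle point is the final step from Witt equivalence to isometry, but this is a standard consequence of the fact that Witt equivalent forms of equal dimension are isometric, via Witt cancellation applied to hyperbolic summands. It is worth noting that the degenerate case $[A]=0$ is automatically consistent: then $\phi\in I^{3}(K)$ has dimension $6<2^{3}$ and is hyperbolic by Arason-Pfister, so $G(\phi)=K^{\times}$ while $\alpha_{4}=0$ identically.
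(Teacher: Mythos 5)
Your proof is correct and begins with the same reduction as the paper: since $-1$ is a square in every extension of the algebraically closed field $F$, the formula of Proposition \ref{invariantalphafour} collapses to $\alpha_{4}=([Q_{1}]+[Q_{2}])\cup(q)$. Where you diverge is at the key equivalence. The paper disposes of the statement ``$[Q_{1}\tens Q_{2}]\cup(q)=0$ if and only if $q$ is a similarity factor of the Albert form $\phi$'' by citing the theorem of Knus--Lam--Shapiro--Tignol as a black box, whereas you reprove it: you identify $(q)\cup[Q_{1}\tens Q_{2}]$ with $e_{3}(\langle\langle q\rangle\rangle\cdot\phi)$ (legitimate, since $\psi\mapsto e_{3}(\langle\langle q\rangle\rangle\psi)$ and $\psi\mapsto(q)\cup e_{2}(\psi)$ are both homomorphisms on $I^{2}/I^{3}$ that agree on $2$-fold Pfister forms), then invoke $\Ker(e_{3})=I^{4}$ (Rost, Merkurjev--Suslin) and the Arason--Pfister Hauptsatz on the $12$-dimensional form $\langle\langle q\rangle\rangle\cdot\phi\in I^{4}$, finishing with Witt cancellation. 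This buys a self-contained argument at the price of the degree-$3$ case of the Milnor conjecture, a heavier input than the citation the paper uses; in exchange it makes transparent exactly where the bound $12<2^{4}$ enters, and your check of the degenerate case $[Q_{1}\tens Q_{2}]=0$ is a sensible addition the paper omits. The only caveat, inherited from Proposition \ref{invariantalphafour} and not specific to your route, is the tacit assumption $\operatorname{char}(F)\neq 2$.
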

\begin{proof}
It follows from Proposition \ref{invariantalphafour} that $\alpha_{4}(Q_{1}, Q_{2}, Q_{3}, Q_{4})=([Q_{1}]+[Q_{2}])\cup (q)$. Let $\phi=\langle a, b, -ab, -c, -d, cd\rangle$ be the Albert form corresponding to $Q_{1}\tens Q_{2}$ for $Q_{1}=(a, b)$ and $Q_{2}=(c, d)$. By \cite[Theorem]{KLST}, $\alpha_{4}$ vanishes if and only if $\langle q\rangle\cdot \phi\simeq \phi$.
\end{proof}

\section{Degree $3$ invariants of $(\gSL_{2})^{n}/\gmu$}\label{sectionfour}

In the present section, we determine the indecomposable invariants of $(\gSL_{2})^{n}/\gmu$ for all $n\geq 2$. For $n=2$, we have $(\gSL_{2}\times \gSL_{4})/\gmu\simeq \gSO_{4}$, thus it follows by \cite[Example 20.3]{GMS} that $\Inv^{3}((\gSL_{2}\times\gSL_{2})/\gmu)_{\ind}=\Z/2\Z$.

\begin{proposition}\label{propsl}
For any $n\geq 2$, we have $\Inv^{3}((\gSL_{2})^{n}/\gmu)_{\ind}=\Z/2\Z$.
\end{proposition}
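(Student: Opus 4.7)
My plan is to apply Merkurjev's description \cite{MerDeg} of $\Inv^{3}(H)_{\ind}$ for a split semisimple group as a quotient of $W$-invariant integer quadratic forms on the cocharacter lattice of a maximal torus by the decomposable sublattice. The case $n=2$ is already handled separately by the isomorphism $H\simeq\gSO_{4}$ and \cite[Example 20.3]{GMS}, and the lattice computation I am about to describe works uniformly for $n\geq 2$.

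First I would set up the lattices. The simply connected cover of $H$ is $\tilde{H}=(\gSL_{2})^{n}$ with $\pi_{1}(H)=\gmu$. Working with the standard diagonal maximal torus of $\tilde{H}$ with character lattice $\Z^{n}$, the characters trivial on $\gmu$ form the sublattice
\[
\Lambda_{H}=\{(a_{1},\dots,a_{n})\in\Z^{n}:a_{i}\equiv a_{j}\pmod 2\}=2\Z^{n}+\Z\cdot(1,\dots,1),
\]
and the dual cocharacter lattice is $\Lambda_{H}^{\vee}=\Z^{n}+\Z\cdot\frac{1}{2}(e_{i}+e_{j})$ for any $i\neq j$, containing $\Z^{n}$ with quotient $(\Z/2\Z)^{n-1}$.

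Since the Weyl group $W=(\Z/2\Z)^{n}$ acts by coordinate sign changes, the $W$-invariant integer quadratic forms on $\Lambda_{H}^{\vee}$ are diagonal $q=\sum c_{i}x_{i}^{2}$, and integrality on each $\frac{1}{2}(e_{i}+e_{j})$ forces $c_{i}+c_{j}\equiv 0\pmod 4$ for all $i\neq j$. A short arithmetic manipulation then yields
\[
Q(H)=4\Z^{n}+\Z\cdot(2,2,\dots,2)\subset\Z^{n},\qquad Q(H)/4\Z^{n}\cong\Z/2\Z.
\]
Under Merkurjev's correspondence the decomposable subgroup $\Dec(H)$, generated by cup products $\chi\cup(a)$ of $\chi\in\Inv^{2}(H)_{\norm}\cong\gmu^{*}\cong(\Z/2\Z)^{n-1}$ (via \eqref{invariantdegreetwo}) with constants $a\in F^{\times}$, should match exactly the sublattice $4\Z^{n}$, giving $\Inv^{3}(H)_{\ind}\cong Q(H)/4\Z^{n}\cong\Z/2\Z$, with generator represented by $(2,2,\dots,2)$ corresponding to the Arason-type invariant $\alpha_{n}$ of \eqref{alphainvariant}.

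The main obstacle I expect is pinning down $\Dec(H)$ as precisely $4\Z^{n}\subset Q(H)$ in Merkurjev's dictionary: one needs to check both that each $\chi\cup(a)$ maps into $4\Z^{n}$ and that every element of $4\Z^{n}$ arises this way. If carrying this out directly in the quadratic-form language turns out to be too delicate, my backup plan is to obtain the upper bound $|\Inv^{3}(H)_{\ind}|\leq 2$ from Merkurjev's exact sequence and the matching lower bound from the injection $\Inv^{3}(G)_{\ind}\hookrightarrow\Inv^{3}(H)_{\ind}$ of Proposition \ref{firstpropsection2} applied to the explicit invariants $\alpha_{n}$ of Section \ref{compdegreeGL} (via Lemma \ref{Gthree} and Proposition \ref{invariantalphafour}) for $n\leq 4$, and for $n\geq 5$ to a class lifted from $\CH^{2}(\bar{X})_{\torsion}$ via (\ref{mainMNZ}).
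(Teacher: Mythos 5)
Your proposal follows essentially the same route as the paper: invoke Merkurjev's identification of $\Inv^{3}(H)_{\ind}$ with $Q(H)/\Dec(H)$, compute the lattice of $W$-invariant integral quadratic forms for $W=(S_{2})^{n}$ acting by sign changes, and divide by the decomposable part. Your conditions $c_{i}+c_{j}\equiv 0 \pmod 4$ and the resulting $Q(H)=4\Z^{n}+\Z\cdot(2,\dots,2)$ for $n\geq 3$ match the paper's (\ref{Stwogen1})--(\ref{Stwogen2}) exactly. Two remarks. First, the step you flag as the main obstacle --- pinning down $\Dec(H)=4\Z^{n}$ --- is precisely what the paper supplies, and it is routine in Merkurjev's dictionary: $\Dec(H)$ is the image of the second Chern class map $c_{2}:\Z[T_{H}^{*}]^{W}\to S^{2}(T_{H}^{*})^{W}$, and evaluating $c_{2}$ on the orbit sums of the basis characters $2\bar{x}_{i}$ and $\sum_{i}\bar{x}_{i}$ gives generators $4\bar{x}_{i}^{2}$ and $2^{n-1}\sum_{i}\bar{x}_{i}^{2}$, which for $n\geq 3$ generate exactly $4\Z^{n}$; the quotient is then $\Z/2\Z$ generated by $2\sum_{i}\bar{x}_{i}^{2}$, i.e.\ by $\alpha_{n}$. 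Your backup plan is unnecessary once this is done (and would be more delicate, since the lower bound for $n\geq 5$ would lean on the $\CH^{2}$ computation).

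Second, your claim that the lattice computation ``works uniformly for $n\geq 2$'' is false at $n=2$: with only one pair $(i,j)$ the sole condition is $c_{1}+c_{2}\equiv 0\pmod 4$, so $Q(H)$ contains $x_{1}^{2}+3x_{2}^{2}$ and is strictly larger than $4\Z^{2}+\Z\cdot(2,2)$; moreover $(2,2)$ lies in $\Dec(H)$ there, consistent with the fact that $\alpha_{2}(Q_{1},Q_{2})=[Q_{1}]\cup(-1)$ is decomposable. The quotient is still $\Z/2\Z$, but with a different generator (the $\gSO_{4}$ invariant $a\langle\langle b,c\rangle\rangle\mapsto (a)\cup[b,c]$, not $\alpha_{2}$). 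Since you defer $n=2$ to the isomorphism $H\simeq\gSO_{4}$ and \cite[Example 20.3]{GMS}, as the paper also does, this does not break the argument, but the uniformity assertion and the claim that the generator is always represented by $(2,\dots,2)$ should be restricted to $n\geq 3$.
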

\begin{proof}
Let $G=(\gGL_{2})^{n}/\gmu$ and $\gmu=\{(\lambda_{1}, \ldots, \lambda_{n})\in (\gmu_{2})^{n}\,|\, \lambda_{1}\cdots \lambda_{n}=1\}\subset G$. Consider the commutative diagram of exact sequences
\begin{equation}\label{CGdiagram}
\xymatrix{
1 \ar@{->}[r] & \gmu \ar@{=}[d]\ar@{->}[r] & (\gmu_{2})^{n} \ar@{_{(}->} [d]\ar@{->}[r]  & \gmu_{2}  \ar@{_{(}->} [d]\ar@{->}[r] & 1\\
1 \ar@{->}[r] & \gmu\ar@{->}[r]  & \bar{T}_{G}\ar@{->}[r]& T_{G} \ar@{->}[r] & 1\\
}
\end{equation}
where $\bar{T}_{G}$ (respectively, $T_{G}$) is a split maximal torus of $(\gGL_{2})^{n}$ (respectively, $G$). This yields the commutative diagram of the corresponding character groups
\begin{equation}\label{TGdiagram}
\xymatrix{
0 \ar@{->}[r] & \Z/2\Z \ar@{->}[r] & (\Z/2\Z)^{n} \ar@{->}[r]  & \gmu^{*}  \ar@{=}[d]\ar@{->}[r] & 0\\
0 \ar@{->}[r] & T_{G}^{*} \ar@{->>}[u]\ar@{->}[r]  & \Z^{2n}\ar@{->>}[u]\ar@{->}[r]& \gmu^{*} \ar@{->}[r] & 0\\
}
\end{equation}
where the map $\Z^{2n}\to (\Z/2\Z)^{n}$ is given by $(a_{1}, b_{1}, \ldots, a_{n}, b_{n})\mapsto (\overline{a_{1}+b_{1}}, \ldots, \overline{a_{n}+b_{n}})$.
It follows from (\ref{TGdiagram}) that $T_{G}^{*}=\{\sum a_{i}x_{i}+b_{i}y_{i} \,|\, \overline{a_{i}+b_{i}}=\overline{a_{j}+b_{j}} \text{ for all } 1\leq i\neq j\leq n\}$, where $\{x_{i}, y_{i}\}$ is a standard basis of $\Z^{2n}$. Hence, we have the following $\Z$-basis of $T_{G}^{*}$
\begin{equation}\label{TGbasis}
\{x_{i}-y_{i},\, 2x_{k},\, \sum_{i}x_{i}\, |\, 1\leq i\leq n,\, 1\leq k\leq n-1\}.
\end{equation}

Let $H=(\gSL_{2})^{n}/\gmu$. We write $\bar{T}_{H}$ (respectively, $T_{H}$) for a split maximal torus of $(\gSL_{2})^{n}$ (respectively, $H$). Then, we have the following commutative diagram of the character groups
\begin{equation}\label{TGHdiagram}
\xymatrix{
0 \ar@{->}[r] & T_{H}^{*} \ar@{->}[r] & \Z^{n} \ar@{->}[r]  & \gmu^{*}  \ar@{=}[d]\ar@{->}[r] & 0\\
0 \ar@{->}[r] & T_{G}^{*} \ar@{->>}[u]\ar@{->}[r]  & \Z^{2n}\ar@{->>}[u]\ar@{->}[r]& \gmu^{*} \ar@{->}[r] & 0\\
}
\end{equation}
where the middle map $\Z^{2n}\to (\Z)^{n}$ is defined by $x_{i}\mapsto \bar{x}_{i}$ and $y_{i}\mapsto -\bar{x}_{i}$. Hence, by (\ref{TGbasis}) we obtain the following $\Z$-basis of $T_{H}^{*}$
\begin{equation}\label{THbasis}
\{2\bar{x}_{k},\, \sum_{i}\bar{x}_{i}\, |\, 1\leq i\leq n,\, 1\leq k\leq n-1\}.
\end{equation}

Let $W=(S_{2})^{n}$ be the Weyl group of $H$, which acts on $\bar{x}_{i}$ by $\bar{x}_{i}\mapsto -\bar{x}_{i}$. Let $\phi$ be a quadratic form on (\ref{THbasis}) over $\Z$, i.e., $\phi\in S^{2}(T_{H}^{*})^{W}$. Since the group $S^{2}(\bar{T}_{H}^{*})^{W}$ is generated by $\{-\bar{x}_{1}^{2}, \ldots, -\bar{x}_{n}^{2}\}$, there exist $d_{i}\in \Z$ for $1\leq i\leq n$ such that
\begin{equation*}
\phi(z_{1},\ldots, z_{n})=\sum_{k}d_{k}\big(\frac{z_{k}}{2}\big)^{2}+d_{n}\big(z_{n}-\sum_{k}\frac{z_{k}}{2}\big)^{2},
\end{equation*}
where $z_{k}=2\bar{x}_{k}$ for $1\leq k\leq n-1$ and $z_{n}=\sum_{i}\bar{x}_{i}$. Thus, we have
\begin{equation}\label{Stwogen1}
\begin{cases} d_{1}+d_{2}\equiv 0\mod 4  & \text{ if } n=2,\\
		d_{i}\equiv 0\mod 2\text{ and } d_{i}\equiv d_{j}\mod 4       & \text{ if } n\geq 3
\end{cases}
\end{equation}
for all $1\leq i\neq j\leq n$. Hence, if $n\geq 3$, then the group $S^{2}(T_{H}^{*})^{W}$ is generated by
\begin{equation}\label{Stwogen2}
\{4\bar{x}_{k}^{2}, \, 2\sum_{i}\bar{x}_{i}^{2}\}.
\end{equation}

Let $c_{2}:\Z[T_{H}^{*}]^{W}\to S^{2}(T_{H}^{*})^{W}$ be the Chern class map defined in \cite[3c]{MerDeg}. Then, it follows from (\ref{THbasis}) that the image of $c_{2}$ is generated by $c_{2}(e^{2\bar{x}_{i}}+e^{-2\bar{x}_{i}})$ for $1\leq i\leq n$ and $c_{2}(\prod_{i} (e^{\bar{x}_{i}}+e^{-\bar{x}_{i}}))$, i.e.,
\begin{equation}\label{Stwodgen2}
\{4\bar{x}_{i}^{2},\, 2^{n-1}\sum_{i}\bar{x}_{i}^{2}\}.
\end{equation}
Therefore, it follows by \cite[Theorem]{MerDeg}, (\ref{Stwogen1}), (\ref{Stwogen2}), and (\ref{Stwodgen2}) that the indecomposable group of $H$ is isomorphic to $\Z/2\Z$ for any $n\geq 2$.\end{proof}


\section{Torsion in $\CH^{2}$ of generic variety associated to $(\gSL_{2})^{4}/\gmu$}\label{sectionfivech}

In this section, we determine the torsion in codimension $2$ cycles of the generic variety of $(\gSL_{2})^{n}/\gmu$ for $n=3, 4$ by using the filtrations on the Grothendieck ring. It turns out that the result for $n=4$ suffices to determine the torsion for all $n\geq 4$ (see Remark \ref{SLFOURD}).

Let $X$ be a smooth projective homogeneous variety over a field $F$ and consider the Grothendieck ring $K_{0}(X)$ of $X$. We set $\Gamma^{0}(X)=K_{0}(X)$ and $\Gamma^{1}(X)=\Ker(\rank:K_{0}(X)\to \Z)$. The gamma filtration
\[K_{0}(X)=\Gamma^{0}(X)\supset \Gamma^{1}(X)\supset \cdots \]
is given by the ideals $\Gamma^{d}(X)$ generated by the product $\gamma_{d_{1}}(x_{1})\cdots \gamma_{d_{i}}(x_{i})$ with $x_{i}\in \Gamma^{1}(X)$ and $d_{1}+\cdots +d_{i}\geq d$, where $\gamma_{d_{j}}$ is the gamma operation on $K_{0}(X)$. For example, we have $\gamma_{0}(x)=1$ and $\gamma_{1}(x)=x$, where $x\in K_{0}(X)$. Indeed, the gamma operation defines the Chern class $c_{i}(x):=\gamma_{i}(x-\rank(x))$ with values in $K_{0}$.

For any $d\geq 0$, we write $\Gamma^{d/d+1}(X)$ for the subsequent quotient $\Gamma^{d}(X)/\Gamma^{d+1}(X)$. Let $E$ be a splitting field of $X$. By a diagram chasing in the diagram involving $0\to \Gamma^{d+1}(X)\to \Gamma^{d}(X)\to \Gamma^{d/d+1}(X)\to 0$ and the one over $E$, one has the following formula \cite[Proposition 2]{Kar95}:
\begin{equation}\label{alphalem}
|\!\oplus \Gamma^{d/d+1}(X)_{\torsion}|\cdot |K_{0}(X_{E})/K_{0}(X)|=\prod_{d=1}^{\dim(X)}|\Gamma^{d/d+1}(X_{E})/\Im(\res^{d/d+1})|,
\end{equation}
where $\res^{d/d+1}: \Gamma^{d/d+1}(X)\to \Gamma^{d/d+1}(X_{E})$ is the restriction map.

We shall write $\operatorname{T}^{d}(X)$ (respectively, $\operatorname{T}^{d/d+1}(X)$) for the topological filtration of degree $d$ given by the ideal generated by the structure sheaf of a closed subvariety of $X$ of codimension at least $d$ (respectively, the quotient $\operatorname{T}^{d}(X)/\operatorname{T}^{d+1}(X)$). For $1\leq d\leq 2$, we have $\Gamma^{d}(X)=\operatorname{T}^{d}(X)$. Moreover, $\Gamma^{2/3}(X)\twoheadrightarrow \operatorname{T}^{2/3}(X)=\CH^{2}(X)$ as $\Gamma^{3}(X)\subseteq \operatorname{T}^{3}(X)$.

Now we consider the product of Severi-Brauer varieties $X=\prod_{i=1}^{n}\SB(A_{i})$ of a central simple $F$-algebra $A_{i}$ of degree $d_{i}$. Let $x_{i}$ be the pullback of the class of the tautological line bundle on $\P^{d_{i}-1}_{E}$ over a splitting field $E$. Then, by  \cite[\S 8 Theorem 4.1]{Qui} the image of the restriction homomorphism $K_{0}(X)\to K_{0}(\prod_{i=1}^{n}\P^{d_{i}-1}_{E})\simeq \Z[x_{1},\cdots, x_{n}]/((x_{1}-1)^{d_{1}},\cdots, (x_{n}-1)^{d_{n}})$ is generated by
\begin{equation}\label{Quillenbasis}
\{ \ind(A_{1}^{\tens i_{1}}\tens \cdots \tens A_{n}^{\tens i_{n}})\cdot x_{1}^{i_{1}}\cdots x_{n}^{i_{n}} \,|\,\, 0\leq i_{j}\leq d_{j}-1, 1\leq j\leq n\}.
\end{equation}
Let $X'=\prod_{i=1}^{n'} \SB(A'_{i})$ be another product of Severi-Brauer varieties of simple $F$-algebras $A_{i}'$ such that $\langle A'_{1}, \ldots, A'_{n'}\rangle=\langle A_{1},\ldots, A_{n}\rangle$ in the Brauer group $\Br(F)$. Then, by applying projective bundle theorem one has (see \cite[Proposition 4.7]{IzhKar})
\begin{equation}\label{reductionSB}
\CH^{2}(X)_{\torsion}\simeq \CH^{2}(X')_{\torsion}.
\end{equation}

Let $H$ be a split semisimple group over $F$ and let $B$ be a Borel subgroup of $H$. We denote by $\bar{X}$ be the generic variety of $H$ as in Section \ref{semidecompandinvariant}. By the localization sequence, the induced pullback of $\bar{X}\hookrightarrow U/B$ gives
\[\CH(U/T)\simeq \CH(U/B)\twoheadrightarrow \CH(\bar{X}), \]
where $T$ is a split maximal torus of $H$ containing $B$. Since the characteristic map $S(T^{*})\to \CH(U/T)$ is an isomorphism, where $S(T^{*})$ denotes the symmetric algebra of the character group $T^{*}$, the Chow group $\CH(U/T)$ is generated by Chern classes of line bundles, thus $\CH(\bar{X})$ is also generated by Chern classes. Therefore, it follows from the commutativity of the diagram \cite[Lemma 2.16]{Kar}
$$
\xymatrix{
K_{0}(\bar{X})\,\,\,\, \ar[rr]^{c_{d}}\ar[d]^{\gamma_{d}\circ (\operatorname{id}-\operatorname{rank})} & & \CH^{d}(\bar{X})\ar@{>>}[d] \\
\Gamma^{d/d+1}(\bar{X}) \ar[rr] & & \operatorname{T}^{d/d+1}(\bar{X})
}
$$
that $\operatorname{T}^{d/d+1}(\bar{X})_{\torsion}=\Gamma^{d/d+1}(\bar{X})_{\torsion}$. In particular, one has
\begin{equation}\label{CHtwoTop}
\CH^{2}(\bar{X})_{\torsion}=\operatorname{T}^{2/3}(\bar{X})_{\torsion}=\Gamma^{2/3}(\bar{X})_{\torsion}.
\end{equation}

\begin{lemma}$($cf. \cite[Proposition 3.1]{Baek}$)$\label{fourgeneric}
Let $\bar{X}_{4}$ be the generic variety associated to $(\gSL_{2})^{4}/\gmu$. Then, the torsion subgroup $\CH^{2}(\bar{X}_{4})_{\torsion}$ is trivial.
\end{lemma}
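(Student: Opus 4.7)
The plan is to identify $\bar{X}_{4}$ with a product of Severi-Brauer varieties of the generic quaternions, reduce the number of factors via (\ref{reductionSB}), and finish via Karpenko's counting formula (\ref{alphalem}).

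First, I would note that the flag variety of the split form of $(\gSL_{2})^{4}/\gmu$ is $(\P^{1})^{4}$, and a $(\gSL_{2})^{4}/\gmu$-torsor corresponding to a quaternion tuple $(Q_{1},Q_{2},Q_{3},Q_{4})$ with $\sum_{i}[Q_{i}]=0$ twists this flag variety into $\SB(Q_{1})\times\SB(Q_{2})\times\SB(Q_{3})\times\SB(Q_{4})$. Applied to the versal torsor, this gives
\[\bar{X}_{4}\cong \SB(D_{1})\times \SB(D_{2})\times \SB(D_{3})\times \SB(D_{4})\]
over $L:=F(U/B)$, where $D_{1},\ldots,D_{4}$ are generic quaternion division algebras subject only to $[D_{1}]+[D_{2}]+[D_{3}]+[D_{4}]=0$ in $\Br(L)$.

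Next, since $[D_{4}]=[D_{1}]+[D_{2}]+[D_{3}]$ the subgroups $\langle D_{1},\ldots,D_{4}\rangle$ and $\langle D_{1},D_{2},D_{3}\rangle$ of $\Br(L)$ coincide, so (\ref{reductionSB}) reduces the problem to showing $\CH^{2}(X')_{\torsion}=0$ for the $3$-fold $X':=\SB(D_{1})\times\SB(D_{2})\times\SB(D_{3})$. By (\ref{CHtwoTop}) it is equivalent to check $\Gamma^{2/3}(X')_{\torsion}=0$.

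Then I would apply (\ref{alphalem}) to $X'$. Over a splitting field $E$ of $D_{1}\tens D_{2}\tens D_{3}$ we have
\[K_{0}(X'_{E})\simeq \Z[x_{1},x_{2},x_{3}]/\bigl((x_{1}-1)^{2},(x_{2}-1)^{2},(x_{3}-1)^{2}\bigr),\]
and Quillen's description (\ref{Quillenbasis}) together with the genericity equalities $\ind(D_{j})=2$, $\ind(D_{j}\tens D_{k})=4$, and $\ind(D_{1}\tens D_{2}\tens D_{3})=8$ identify $K_{0}(X')$ with the concrete sublattice spanned by $\{1,\,2y_{j},\,4y_{j}y_{k},\,8y_{1}y_{2}y_{3}\}$ where $y_{j}:=x_{j}-1$, yielding $|K_{0}(X'_{E})/K_{0}(X')|=2^{3}\cdot 4^{3}\cdot 8=2^{12}$. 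For the right-hand side of (\ref{alphalem}), I would observe that the products $\gamma_{1}(2y_{i_{1}})\cdots\gamma_{1}(2y_{i_{d}})$ lie in $\Gamma^{d}(X')$ and produce an upper bound $8,\,64,\,8$ for the cokernels in degrees $d=1,2,3$, whose product is again $2^{12}$.

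The crux of the argument is this index-matching: the naive Chern-class constructions already realize cokernels whose product is $\leq 2^{12}$, while the formula (\ref{alphalem}) gives a lower bound of $|K_{0}(X'_{E})/K_{0}(X')|=2^{12}$ on the same product. Forcing equality yields $|\!\oplus\Gamma^{d/d+1}(X')_{\torsion}|=1$, and in particular $\Gamma^{2/3}(X')_{\torsion}=0$. The main obstacle is justifying that no additional classes arising from Chern classes of higher-rank bundles (e.g.\ tensor products of the tautological bundles on the factors $\SB(D_{j})$) can beat the naive bound on any of the cokernels; this is handled along the lines of \cite[Proposition 3.1]{Baek}, and the decisive input is that three generic quaternions are $2$-linearly independent in $\Br(L)$ so that the indices above are as large as possible.
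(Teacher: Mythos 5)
Your overall strategy (identify $\bar{X}_{4}$ with a product of four conics, drop one factor via (\ref{reductionSB}), and run Karpenko's counting formula (\ref{alphalem}) on the remaining three-fold) is exactly the paper's, but there is a genuine error at the decisive point: the index of the triple tensor product. You assert $\ind(D_{1}\tens D_{2}\tens D_{3})=8$ and call the "$2$-linear independence" of the three generic quaternions the decisive input. This contradicts the relation $[D_{1}]+[D_{2}]+[D_{3}]+[D_{4}]=0$ that you yourself impose: it forces $[D_{1}]+[D_{2}]+[D_{3}]=[D_{4}]$ in $\Br(L)$, hence $\ind(D_{1}\tens D_{2}\tens D_{3})=\ind(D_{4})=2$. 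Consequently the Quillen lattice is spanned by $\{1,\,2y_{j},\,4y_{j}y_{k},\,2x_{1}x_{2}x_{3}\}$ (coefficient $2$, not $8$, on the top class), and $|K_{0}(X'_{E})/K_{0}(X')|=2^{10}$, not $2^{12}$.

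With the correct index, your index-matching collapses: the naive cokernel bounds $8,\,64,\,8$ coming from products $\gamma_{1}(2y_{i_{1}})\cdots\gamma_{1}(2y_{i_{d}})$ still multiply to $2^{12}$, while the left-hand side of (\ref{alphalem}) now contributes only $2^{10}$ from the $K_{0}$-index, so you would only conclude $|\!\oplus\Gamma^{d/d+1}(X')_{\torsion}|\leq 4$ --- not triviality. The missing idea is precisely to exploit the \emph{smallness} of $\ind(D_{1}\tens D_{2}\tens D_{3})$: since $2x_{1}x_{2}x_{3}\in K_{0}(X')$, its second Chern class $c_{2}(2x_{1}x_{2}x_{3})=6y_{1}y_{2}y_{3}+2(y_{1}y_{2}+y_{1}y_{3}+y_{2}y_{3})$ lies in $\Gamma^{2}(X')$, improving the codimension-$2$ cokernel bound to $4^{2}\cdot 2$, and $c_{2}(2x_{1}x_{2}x_{3})\cdot 2y_{1}=4y_{1}y_{2}y_{3}\in\Gamma^{3}(X')$ improves the codimension-$3$ bound to $4$; the product then equals $2^{10}$ and the torsion vanishes. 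So the genericity works for you in the opposite direction from the one you invoke: it is the forced \emph{drop} in index of the triple product, not maximality of indices, that kills the torsion.
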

\begin{proof}
The generic variety $\bar{X}_{4}$ is the product of conics $\prod_{i=1}^{4}\SB(Q_{i})$ over the function field $K$ of a classifying space of $(\gSL_{2})^{4}/\gmu$, where $Q_{1}, Q_{2}, Q_{3}, Q_{4}$ are quaternion algebras over $K$ satisfying
\[\ind(Q_{1}\tens Q_{2}\tens Q_{3}\tens Q_{4})=1, \quad \ind(Q_{i}\tens Q_{j}\tens Q_{k})=2, \quad \ind(Q_{i}\tens Q_{j})=4, \quad \ind(Q_{i})=2 \]
for all $1\leq i\neq j\neq k\leq 4$.

Let $X=\prod_{i=1}^{3}\SB(Q_{i}')$, where $Q_{1}', Q_{2}', Q_{3}'$ are quaternion $K$-algebras such that
\[\ind(Q_{1}'\tens Q_{2}'\tens Q_{3}')=2, \quad \ind(Q_{i}'\tens Q_{j}')=4, \quad \ind(Q_{i}')=2 \]
for all $1\leq i\neq j \leq 3$. By (\ref{reductionSB}), it suffices to show $\CH^{2}(X)_{\torsion}=0$.

By (\ref{Quillenbasis}), we have a basis $\{1, 2x_{i}, 4x_{i}x_{j}, 2x_{1}x_{2}x_{3}\}$ of $K_{0}(X)$, where $x_{i}$ is the pullback of the tautological line bundle on the projective line over a splitting field $E$ of $X$. Hence, $|K_{0}(X_{E})/K_{0}(X)|=2^{10}$. By substitution $y_{i}=x_{i}-1$ for all $1\leq i\leq 3$ we have another basis of $K_{0}(X)$
\begin{equation}\label{threequaternionbasis}
\{1, 2y_{i}, 4y_{i}y_{j}, 2(y_{1}y_{2}y_{3}+y_{1}y_{2}+y_{1}y_{3}+y_{2}y_{3})\}.
\end{equation}

Let $\epsilon_{n}=|\Gamma^{n/n+1}(X_{E})/\Im(\res^{n/n+1})|$ for $1\leq n\leq 3$. Then, we obtain $\epsilon_{1}\leq 2^{3}$. Since $c_{2}(2x_{1}x_{2}x_{3})=6y_{1}y_{2}y_{3}+2(y_{1}y_{2}+y_{1}y_{3}+y_{2}y_{3})\in \Gamma^{2}(X)$ and $c_{2}(2x_{1}x_{2}x_{3})\cdot 2y_{1}=4y_{1}y_{2}y_{3}\in \Gamma^{3}(X)$, it follows from (\ref{threequaternionbasis}) that $\epsilon_{2}\leq 4^{2}\cdot 2$ and $\epsilon_{3}\leq 4$, respectively. Therefore, $|\!\oplus \Gamma^{n/n+1}(X)_{\torsion}|\leq 1$, i.e., $\Gamma^{2/3}(X)_{\torsion}=0$, thus $\CH^{2}(X)_{\torsion}=0$.
\end{proof}

\begin{proposition}\label{fivegeneric}
Let $\bar{X}_{5}$ be the generic variety associated to $(\gSL_{2})^{5}/\gmu$. Then, we have $\CH^{2}(\bar{X}_{5})_{\torsion}=\Z/2\Z$.
\end{proposition}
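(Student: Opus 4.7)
The plan is to adapt the gamma-filtration computation of Lemma~\ref{fourgeneric} to the $5$-quaternion case, tracking how the presence of a nontrivial top-degree index produces a single $2$-torsion class in $\CH^2$. By (\ref{reductionSB}), it suffices to treat $X=\prod_{i=1}^{4}\SB(Q_{i}')$, where $\langle [Q_{1}'],\ldots,[Q_{4}']\rangle=\langle[Q_{1}],\ldots,[Q_{5}]\rangle$ in $\Br(K)$; the relation $\sum_{i=1}^{5}[Q_{i}]=0$ reduces the family to rank $4$ and yields the index pattern $\ind(Q_{i}')=2$, $\ind(Q_{i}'\tens Q_{j}')=\ind(Q_{i}'\tens Q_{j}'\tens Q_{k}')=4$, and $\ind(Q_{1}'\tens Q_{2}'\tens Q_{3}'\tens Q_{4}')=\ind(Q_{5})=2$. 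By (\ref{Quillenbasis}), $|K_{0}(X_{E})/K_{0}(X)|=2^{4}\cdot 4^{6}\cdot 4^{4}\cdot 2=2^{25}$.

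Writing $y_{i}=x_{i}-1$, the decisive new ingredient absent from Lemma~\ref{fourgeneric} is the Chern class $c_{2}(2x_{1}x_{2}x_{3}x_{4})=(x_{1}x_{2}x_{3}x_{4}-1)^{2}\in\Gamma^{2}(X)$, which is available precisely because $\ind(\bigotimes_{i}Q_{i}')=2$; it expands as $2\sum_{|T|=2}y_{T}+6\sum_{|T|=3}y_{T}+14\,y_{1}y_{2}y_{3}y_{4}$ and therefore contributes ``diagonal'' classes at each filtration level. Multiplying this class by the various $2y_{l}\in\Gamma^{1}(X)$ and $(2y_{i})(2y_{j})\in\Gamma^{2}(X)$, and combining with the pure products $\prod(2y_{i})$, I will establish the upper bounds $\epsilon_{1}\leq 2^{4}$, $\epsilon_{2}\leq 2^{11}$, $\epsilon_{3}\leq 2^{8}$, and $\epsilon_{4}\leq 2^{3}$ (the last from $c_{2}(2x_{1}x_{2}x_{3}x_{4})\cdot(2y_{i})(2y_{j})=8\,y_{1}y_{2}y_{3}y_{4}$). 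Then (\ref{alphalem}) gives $|\oplus_{d}\Gamma^{d/d+1}(X)_{\torsion}|\leq 2^{26}/2^{25}=2$.

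It remains to identify this factor of $2$ as living in $\Gamma^{2/3}(X)=\CH^{2}(X)_{\torsion}$ via (\ref{CHtwoTop}). Since $\Gamma^{1/2}(X)=\Pic(X)=(2\Z)^{4}$ is torsion-free, it suffices to rule out torsion in $\Gamma^{3/4}$ and $\Gamma^{4/5}$; this will follow by matching the upper bounds $\epsilon_{3}\leq 2^{8}$ and $\epsilon_{4}\leq 2^{3}$ with corresponding lower bounds obtained by analyzing which elements $\sum_{S} n_{S}c_{S}x_{S}\in K_{0}(X)$ can lie in $\Gamma^{d}(X_{E})$. The key observation is that the vanishing conditions $\sum_{S\supseteq T}n_{S}c_{S}=0$ for $|T|\leq d-1$ force $n_{\{1,2,3,4\}}$ to be divisible by $4$ once $d\geq 3$, constraining the image of $\res^{d/d+1}$ sharply enough to close the gap and to place the surviving $2$-torsion in $\Gamma^{2/3}$; the nontriviality will then be witnessed by the class of $c_{2}(2x_{1}x_{2}x_{3}x_{4})$ modulo $\Im(\res^{2/3})$.

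The main obstacle is the tightness $\epsilon_{4}=2^{3}$: although the constraint analysis gives only $\Im(\res^{4/5})\subseteq 4\Z$, showing that in fact $\Im(\res^{4/5})=8\Z$ requires a finer argument exploiting that $\Gamma^{4}(X)$ is the ideal generated by Chern-class products and is strictly smaller than the mere intersection $K_{0}(X)\cap\Gamma^{4}(X_{E})$. This discrepancy between ``formal'' and ``honest'' elements of $\Gamma^{4}(X)$ is precisely what distinguishes $n=5$ from Lemma~\ref{fourgeneric} and produces the nontrivial $2$-torsion in $\CH^{2}(\bar{X}_{5})$.
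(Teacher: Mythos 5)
Your reduction to $X=\prod_{i=1}^{4}\SB(Q_i')$, the index pattern, the computation $|K_0(X_E)/K_0(X)|=2^{25}$, and the upper bounds $\epsilon_1\le 2^4$, $\epsilon_2\le 2^{11}$, $\epsilon_3\le 2^{8}$, $\epsilon_4\le 2^3$ coincide with the paper's argument (your $\epsilon_3\le 2^8$ is in fact what makes (\ref{alphalem}) balance), and they correctly give $|\oplus_d\Gamma^{d/d+1}(X)_{\torsion}|\le 2$. The gap is in the lower bound. First, your proposed witness is not a torsion class: the class of $c_2(2x_1x_2x_3x_4)$ modulo $\Im(\res^{2/3})$ lives in the cokernel $\Gamma^{2/3}(X_E)/\Im(\res^{2/3})$, which measures the deficiency of the restriction map, whereas $\Gamma^{2/3}(X)_{\torsion}$ is the kernel of $\Gamma^{2/3}(X)\to\Gamma^{2/3}(X_E)$; since $c_2(2x_1x_2x_3x_4)$ restricts to $2\sum y_iy_j\neq 0$ in $\Gamma^{2/3}(X_E)$, it cannot represent a torsion class. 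Second, the alternative route you sketch --- showing that the $\epsilon_d$ attain their upper bounds and ruling out torsion in $\Gamma^{3/4}$ and $\Gamma^{4/5}$ --- requires knowing the ideals $\Gamma^3(X)$ and $\Gamma^4(X)$ exactly, and you defer precisely this (``the main obstacle'') without resolving it; moreover your ``key observation'' about divisibility of $n_{\{1,2,3,4\}}$ concerns $K_0(X)\cap\Gamma^d(X_E)$, which is in general strictly larger than $\Gamma^d(X)$, so it cannot by itself determine $\Im(\res^{d/d+1})$.

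The paper closes this gap differently and more directly: it exhibits $4y_iy_jy_k$ as an explicit $2$-torsion class in $\Gamma^{2/3}(X)$. One checks that $4y_iy_jy_k\in\Gamma^2(X)$ (from the computation of $2c_2(4x_1x_2x_3x_4)-3z_l$), that $8y_iy_jy_k\in\Gamma^3(X)$, and --- the crucial step --- that $4y_iy_jy_k\notin\Gamma^3(X)$. The last point is proved by writing down an explicit generating set $\{8y_1y_2y_3y_4,\ 8y_iy_jy_k,\ z_l\}$ of the ideal $\Gamma^3(X)$, deducing $4(y_1y_2y_3y_4+y_iy_jy_k)\in\Gamma^3(X)$, and then showing $4y_1y_2y_3y_4\notin\Gamma^3(X)$ by comparing coefficients of $y_1y_2y_3y_4$ in any expression in these generators, which forces the impossible congruence $4\equiv 0\pmod 8$. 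This single computation simultaneously produces the nontrivial torsion class and locates it in codimension $2$, after which the upper bound $\le 2$ finishes the proof; it is exactly the ingredient missing from your plan. To salvage your approach you would need to replace your witness by an element of $\Gamma^2(X)$ whose restriction lies in $\Gamma^3(X_E)$ and then carry out the same generator analysis of $\Gamma^3(X)$.
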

\begin{proof}
Let $X=\prod_{i=1}^{4}\SB(Q_{i}')$, where $Q_{1}', Q_{2}', Q_{3}', Q_{4}'$ are quaternion algebras over a field $K/F$ such that
\[\ind(Q_{1}'\tens Q_{2}'\tens Q_{3}'\tens Q_{4}')=2, \quad \ind(Q_{i}'\tens Q_{j}'\tens Q_{k}')=4, \quad \ind(Q_{i}'\tens Q_{j}')=4 \]
for all $1\leq i\neq j\neq k\leq 4$. We first show that $\Gamma^{2/3}(X)_{\torsion}=\Z/2\Z$.

Let $x_{i}$ be the pullback of the tautological line bundle on $\P^{1}$ over a splitting field $E$ of $X$. By (\ref{Quillenbasis}), we have a basis $\{1, 2x_{i}, 4x_{i}x_{j}, 2x_{i}x_{j}x_{k}, 2x_{1}x_{2}x_{3}x_{4}\}$ of $K_{0}(X)$. Therefore, $|K_{0}(X_{E})/K_{0}(X)|=2^{4}\cdot 4^{6}\cdot 4^{4}\cdot 2$. Consider another basis
\[\{1, 2y_{i}, 4y_{i}y_{j}, 4y_{i}y_{j}y_{k}, 2(y_{1}y_{2}y_{3}y_{4}+\sum y_{i}y_{j}y_{k}+\sum y_{i}y_{j})\}\]
of $K_{0}(X)$, where $y_{i}=x_{i}-1$. Let $\epsilon_{n}=|\Gamma^{n/n+1}(X_{E})/\Im(\res^{n/n+1})|$ for $1\leq n\leq 4$. It is obvious that $\epsilon_{1}\leq 2^{4}$. As $c_{2}(2x_{1}x_{2}x_{3}x_{4})=14y_{1}y_{2}y_{3}y_{4}+6\sum y_{i}y_{j}y_{k}+2\sum y_{i}y_{j}\in \Gamma^{2}(X)$ and $8y_{1}y_{2}y_{3}y_{4}=2y_{1}\cdot 2y_{2}\cdot c_{2}(2x_{1}x_{2}x_{3}x_{4})\in \Gamma^{4}(X)$, we have $\epsilon_{2}\leq 4^{5}\cdot 2$ and $\epsilon_{4}\leq 2^{3}$, respectively. It follows from the computation of $2y_{i}\cdot c_{2}(2x_{1}x_{2}x_{3}x_{4})$ that
\[z_{l}:=4(y_{1}y_{2}y_{3}y_{4}+\sum_{1\leq p<q<r\leq 4} y_{p}y_{q}y_{r})-4y_{i}y_{j}y_{k}\in \Gamma^{3}(X)\]
for all $l$ such that $\{i, j, k, l\}=\{1, 2, 3, 4\}$. Hence, $4(y_{1}y_{2}y_{3}y_{4}+y_{i}y_{j}y_{k})\in \Gamma^{3}(X)$, thus $\epsilon_{3}\leq 4^{3}$. We conclude from (\ref{alphalem}) that $|\!\oplus \Gamma^{n/n+1}(X)_{\torsion}|\leq 2$.

As $\Gamma^{3}(X)$ is generated by $\Gamma^{1}(X)\cdot \Gamma^{2}(X)$, $c_{2}(4x_{i}x_{j})=12y_{i}y_{j}$, and $c_{2}(4x_{i}x_{j}x_{k})=36y_{i}y_{j}y_{k}+12(y_{i}y_{j}+y_{i}y_{k}+y_{j}y_{k})$, one can easily see that $\Gamma^{3}(X)$ is generated by
\[\{8y_{1}y_{2}y_{3}y_{4}, 8y_{i}y_{j}y_{k}, z_{l}\}\]
for all $i, j, k, l$ satisfying $\{i, j, k, l\}=\{1, 2, 3, 4\}$. Hence, if $4y_{1}y_{2}y_{3}y_{4}\in \Gamma^{3}(X)$, then there exist $f_{m}$ in the image of $K_{0}(X)$ (as an element of $\Z[y_{1}, y_{2}, y_{3}, y_{4}]/(y_{1}^{2}, y_{2}^{2}, y_{3}^{2}, y_{4}^{2})$) for $1\leq m\leq 9$ such that
\begin{equation}\label{generatedbyy}
4y_{1}y_{2}y_{3}y_{4}=(\sum_{\{i, j, k, l\}=\{1, 2, 3, 4\}}f_{l}\cdot 8y_{i}y_{j}y_{k})+f_{5}z_{1}+f_{6}z_{2}+f_{7}z_{3}+f_{8}z_{4}+f_{9}\cdot 8y_{1}y_{2}y_{3}y_{4}
\end{equation}
(indeed, we may assume that $f_{i}\in \Z$). By comparing coefficients of (\ref{generatedbyy}), we obtain
\[8(f_{1}+f_{2}+f_{3}+f_{4}+f_{9})+16(f_{5}+f_{6}+f_{7}+f_{8})=4,\]
which is impossible. Therefore, $4y_{1}y_{2}y_{3}y_{4}\not\in \Gamma^{3}(X)$.

Since $4y_{1}y_{2}y_{3}y_{4}\not\in \Gamma^{3}(X)$ and $4(y_{1}y_{2}y_{3}y_{4}+y_{i}y_{j}y_{k})\in \Gamma^{3}(X)$, all the elements $4y_{i}y_{j}y_{k}$ are not contained in $\Gamma^{3}(X)$. On the other hand, it follows from the computation of $2c_{2}(4x_{1}x_{2}x_{3}x_{4})-3z_{l}$ that $4y_{i}y_{j}y_{k}\in \Gamma^{2}(X)$. Therefore, the class of $4y_{i}y_{j}y_{k}$ generates a subgroup of $\Gamma^{2/3}(X)_{\torsion}$ of order $2$ as $8y_{i}y_{j}y_{k}\in \Gamma^{3}(X)$.

As the generic variety $\bar{X}_{5}$ is $\prod_{i=1}^{5}\SB(Q_{i})$, where $Q_{i}$ is a quaternion algebra over the function field $K$ of a classifying space of $(\gSL_{2})^{4}/\gmu$ such that
\[\ind(\prod_{i=1}^{5}Q_{i})=1,\, \ind(Q_{i}\tens Q_{j}\tens Q_{k}\tens Q_{l})=2,\, \ind(Q_{i}\tens Q_{j}\tens Q_{k})=\ind(Q_{i}\tens Q_{k})=4\]
for all $1\leq i\neq j\neq k\neq l\leq 5$, the result follows from $\Gamma^{2/3}(X)_{\torsion}=\Z/2\Z$, (\ref{reductionSB}), and (\ref{CHtwoTop}).\end{proof}

\begin{remark}
Note that it was shown that $|\!\oplus \Gamma^{n/n+1}(X)_{\torsion}|=2$ in \cite[Theorem 3.5]{Baek}.
\end{remark}


\section{Proof of theorem}\label{sectionforpf}

It follows from Proposition \ref{propsl} and Lemma \ref{Gthree} that $\Inv^{3}(H)_{\ind}=\Z/2\Z$ for $n\geq 2$ and $\Inv^{3}(G)_{\ind}=\Z/2\Z$ for $n\geq 3$. To verify the remaining parts of Theorem we apply Propositions \ref{invariantalphafour}, \ref{propsl}, \ref{fivegeneric} and Lemmas \ref{Gthree}, \ref{fourgeneric} to Proposition \ref{firstpropsection2}. The proof is divided into four cases.

{\it Case}: $n=2$. Since $G\tors(K)\simeq \gPGL_{2}\tors(K)$ for any field extension $K/F$, it follows from \cite[\S 4b]{MerDeg} that $\Inv^{3}(G)_{\ind}=0$, thus $\Sdec(G)/\Dec(G)=0$. On the other hand, it is easy to see that the torsion subgroup $\CH^{2}(\bar{X}_{2})_{\torsion}$ of the corresponding generic variety $\bar{X}_{2}$ is trivial. Hence, it follows from Propositions \ref{firstpropsection2} and \ref{propsl} that $\Sdec(H)/\Dec(H)=\Z/2\Z$.

{\it Case}: $n=3$ or $4$. By (\ref{reductionSB}), the torsion subgroup $\CH^{2}(\bar{X}_{3})_{\torsion}$ of the generic variety $\bar{X}_{3}$ is isomorphic to the torsion subgroup of a flag variety of dimension $2$, thus $\CH^{2}(\bar{X}_{3})=0$. Therefore, it follows from Lemma \ref{fourgeneric} and Proposition \ref{propsl} that $\Sdec(H)/\Dec(H)=\Z/2Z$. On the other hand, the second statement of Theorem follows from Lemma \ref{Gthree} and Proposition \ref{invariantalphafour}, thus $\Sdec(G)/\Dec(G)=\Z/2\Z$.

{\it Case}: $n=5$. It follows from Propositions \ref{propsl} and \ref{fivegeneric} that $\Sdec(H)=\Dec(H)$. Therefore, by Proposition \ref{firstpropsection2} we also have $\Sdec(G)=\Dec(G)$.

{\it Case}: $n\geq 6$. Consider the nontrivial invariant $\alpha_{n}$ in (\ref{alphainvariant}). We claim that if $\alpha_{n}$ is semi-decomposable but not decomposable, then so is $\alpha_{n-1}$. Consider the restriction map $r: \Inv^{3}((\gGL_{2})^{n}/\gmu)\to \Inv^{3}((\gGL_{2})^{n-1}/\gmu)$ defined by
\[r(\alpha)(Q_{1},\ldots, Q_{n-1})=\alpha(Q_{1},\ldots, Q_{n-1}, 0)\]
for $(Q_{1},\ldots, Q_{n-1})\in (\gGL_{2})^{n-1}/\gmu\tors(K)$ and $\alpha\in \Inv^{3}((\gGL_{2})^{n}/\gmu)$. Then, we have $r(\alpha_{n})(Q_{1},\ldots, Q_{n-1})=e_{3}(\sum_{i=1}^{n-1} N_{Q_{i}})=\alpha_{n-1}(Q_{1},\ldots, Q_{n-1})$, i.e., $r(\alpha_{n})=\alpha_{n-1}$. Assume that $\alpha_{n}$ is semi-decomposable. Then, it follows from (\ref{degreetwonormalized}) and (\ref{TGdiagram}) that there exist $\chi_{i}=(b_{1i},\ldots, b_{ni})\in (\Z/2\Z)^{n}/\Z/2\Z=\gmu^{*}$ and $x_{i}\in K^{\times}$ such that
\[\alpha_{n}(Q_{1},\ldots, Q_{n})=\sum_{i}(b_{1i}[Q_{1}]+\cdots+b_{ni}[Q_{n}])\cup (x_{i}).\]
Hence, $\alpha_{n-1}=r(\alpha_{n})$ is semi-decomposable. To show that $\Sdec(G)=\Dec(G)$ we use induction on $n$ together with claim and the previous case. Finally, it follows from Proposition \ref{firstpropsection2} that $\Sdec(H)=\Dec(H)$.

\begin{remark}\label{SLFOURD}
The proof of Theorem shows that for any $n\geq 5$ the torsion subgroup $\CH^{2}(\bar{X}_{n})_{\torsion}$ of the generic variety associated to $\gSL_{n}/\gmu$ is isomorphic to $\Z/2\Z$.
\end{remark}

\section{Additional example of a nontrivial semi-decomposable invariant}

In the present section, we give an example illustrating that a minor modification of our method can be used to determine the semi-decomposable invariants.

\begin{lemma}\label{lemmaforslfour}
Let $\gmu=\{(\lambda_{1}, \lambda_{2})\in (\gmu_{4})^{2}\,|\, \lambda_{1}^{2}\lambda_{2}^{2}=1\}$ be a central subgroup of $\gSL_{4}\times \gSL_{4}$. Then, $\Inv^{3}((\gSL_{4}\times \gSL_{4})/\gmu)_{\ind}=\Z/2\Z$.
\end{lemma}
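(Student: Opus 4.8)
The plan is to follow the strategy of Proposition \ref{propsl}, applying \cite[Theorem]{MerDeg}, which in our situation identifies $\Inv^{3}(H)_{\ind}$ with the quotient $S^{2}(T_{H}^{*})^{W}/\Im(c_{2})$, where $T_{H}^{*}$ is the character lattice of a maximal torus of $H=(\gSL_{4}\times\gSL_{4})/\gmu$, $W=S_{4}\times S_{4}$ is the Weyl group, and $c_{2}$ is the Chern class map of \cite[3c]{MerDeg}. First I would dualize the defining sequence $1\to\gmu\to\gmu_{4}\times\gmu_{4}\to\gmu_{2}\to1$ (the last map being $(\lambda_{1},\lambda_{2})\mapsto\lambda_{1}^{2}\lambda_{2}^{2}$) to compute $\gmu^{*}\simeq\Z/4\Z\oplus\Z/2\Z$, and then describe $T_{H}^{*}=\Ker(P_{1}\oplus P_{2}\to\gmu^{*})$ inside the weight lattice $P_{1}\oplus P_{2}$ of $\gSL_{4}\times\gSL_{4}$ (each $P_{i}$ the weight lattice of type $A_{3}$), together with an explicit $\Z$-basis as in (\ref{THbasis}). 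Equivalently, I would record the dual lattice $T_{H,*}$ of cocharacters, which is the overlattice of the coroot lattice $Q_{1}^{\vee}\oplus Q_{2}^{\vee}$ cut out by the parity condition $[\lambda]\equiv[\mu]\pmod 2$ on the classes in $P_{i}^{\vee}/Q_{i}^{\vee}\simeq\Z/4\Z$.

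Next I would compute $S^{2}(T_{H}^{*})^{W}$. Since $(P_{i})^{S_{4}}=0$, the space of $W$-invariant quadratic forms is spanned over $\Q$ by $q_{1}=c_{2}(V_{1})$ and $q_{2}=c_{2}(V_{2})$, the second Chern classes of the two standard representations, with no cross terms. Writing a general invariant as $\alpha q_{1}+\beta q_{2}$ and imposing that it lie in $S^{2}(T_{H}^{*})$ — that is, that it take integral values on the cocharacter lattice $T_{H,*}$ — reduces to evaluating $q_{i}(\lambda)=-\tfrac12\sum\lambda_{k}^{2}$ on coroots and on the fundamental coweights $\omega_{1}^{\vee},\omega_{2}^{\vee}$. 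The coroots force $\alpha,\beta\in\Z$; the coweight $\omega_{2}^{\vee}$ (of class $2$), on which $q_{i}=-1/2$, forces $\alpha,\beta$ even; and the class-$(1,1)$ generator $(\omega_{1}^{\vee},\omega_{1}^{\vee})$, on which $q_{i}=-3/8$, forces $\alpha+\beta\equiv0\pmod 8$. This is the analogue of the congruences (\ref{Stwogen1}), and it yields $S^{2}(T_{H}^{*})^{W}=\{\alpha q_{1}+\beta q_{2}\mid \alpha,\beta\in 2\Z,\ \alpha+\beta\equiv0\!\!\pmod 8\}$, a sublattice of index $16$ in $\Z q_{1}\oplus\Z q_{2}$.

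Then I would compute $\Im(c_{2})$ from the representations of $H$, i.e.\ the representations of $\gSL_{4}\times\gSL_{4}$ whose central character is trivial on $\gmu$; these are exactly the external tensor products $V_{\lambda^{(1)}}\boxtimes V_{\lambda^{(2)}}$ with $([\lambda^{(1)}],[\lambda^{(2)}])\in\{(0,0),(2,2)\}$. The adjoint representation of each factor has Dynkin index $2h^{\vee}=8$, which is minimal among representations of $\gPGL_{4}$, and contributes $8q_{1}$ and $8q_{2}$; the product $\wedge^{2}V_{1}\boxtimes\wedge^{2}V_{2}$ (each factor of rank $6$ and Dynkin index $2$) contributes $6\cdot2\,q_{1}+6\cdot2\,q_{2}=12(q_{1}+q_{2})$. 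Hence $\Im(c_{2})=\langle 8q_{1},\,8q_{2},\,12(q_{1}+q_{2})\rangle=\langle 8q_{1},\,4(q_{1}+q_{2})\rangle$, of index $32$. Comparing indices gives $\Inv^{3}(H)_{\ind}\simeq S^{2}(T_{H}^{*})^{W}/\Im(c_{2})\simeq\Z/2\Z$.

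The main obstacle is the second step together with the minimality input in the third. Unlike the type $A_{1}$ computation of Proposition \ref{propsl}, where each simple factor contributes a single square $4\bar{x}_{i}^{2}$, here one must carefully track the duality — the invariant forms are constrained by integrality on the cocharacter overlattice $T_{H,*}$, not on $T_{H}^{*}$ itself — and correctly evaluate $q_{i}$ on the fractional coweights, since any error in the resulting congruences changes the index of $S^{2}(T_{H}^{*})^{W}$. The delicate point in the $c_{2}$-computation is to verify that $8$ is the minimal Dynkin index of a representation of $\gPGL_{4}$ (for instance $V_{2\omega_{2}}$ has index $16$, so the greatest common divisor remains $8$); were a representation of index $4$ to exist, the element $4q_{1}$ would lie in $\Im(c_{2})$ and the quotient would collapse. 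Establishing that the minimal contributions are precisely $8q_{1},8q_{2}$ and $12(q_{1}+q_{2})$ is what pins the index at $2$.
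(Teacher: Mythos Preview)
Your approach is essentially the paper's: both apply \cite[Theorem]{MerDeg}, determine $T_{H}^{*}$ (the paper via the explicit basis $\{\bar x_{1}-\bar x_{2},\bar x_{1}-\bar x_{3},\bar y_{1}-\bar y_{2},\bar y_{1}-\bar y_{3},2\bar x_{1}+2\bar y_{1},2\bar x_{1}-2\bar y_{1}\}$, you via the dual cocharacter lattice), and find $S^{2}(T_{H}^{*})^{W}=\langle 4q_{1}+4q_{2},\,2q_{1}+6q_{2}\rangle$ --- which is exactly your lattice $\{\alpha q_{1}+\beta q_{2}:\alpha,\beta\in 2\Z,\ \alpha+\beta\equiv 0\pmod 8\}$ --- together with $\Im(c_{2})=\langle 8q_{1},\,4q_{1}+4q_{2}\rangle$, yielding the quotient $\Z/2\Z$.

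One small remark on your stated obstacle: the worry that a $\gPGL_{4}$-representation of Dynkin index $4$ might put $4q_{1}$ into $\Im(c_{2})$ is moot, since $\Im(c_{2})\subseteq S^{2}(T_{H}^{*})^{W}$ automatically and your own congruence $\alpha+\beta\equiv 0\pmod 8$ already excludes $4q_{1}$. The substantive verification --- which the paper, like you, does not spell out --- is rather that $2q_{1}+6q_{2}$ (equivalently $2q_{1}-2q_{2}$) does not lie in $\Im(c_{2})$, i.e.\ that no virtual representation of $H$ realizes it; this is what pins the quotient at order $2$ rather than $1$.
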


\begin{proof}
Basically, we follow the proof of Proposition \ref{propsl}. Let $H=(\gSL_{4}\times \gSL_{4})/\gmu$ and $G=(\gGL_{4}\times \gGL_{4})/\gmu$. The same commutative diagram as (\ref{CGdiagram}) with $(\gmu_{2})^{n}$ replaced by $(\gmu_{4})^{2}$ yields the same commutative diagram of the corresponding character groups as (\ref{TGdiagram}) with $(\Z/2\Z)^{n}$ and $\Z^{2n}$ replaced by $\Z/4\Z\oplus \Z/4\Z$ and $\Z^{4}\oplus \Z^{4}$, where the map $\Z/4\Z\oplus \Z/4\Z\to \Z^{4}\oplus \Z^{4}$ is given by
\[(a_{1}, a_{2}, a_{3}, a_{4}, b_{1}, b_{2}, b_{3}, b_{4})\mapsto (\overline{a_{1}+a_{2}+a_{3}+a_{4}}, \overline{b_{1}+b_{2}+b_{3}+b_{4}}).\]
Hence, $T_{G}^{*}=\{\sum a_{i}x_{i}+b_{i}y_{i} \,|\, \overline{\sum a_{i}}=\overline{\sum b_{i}}=\bar{0} \text{ or } \overline{\sum a_{i}}=\overline{\sum b_{i}}=\bar{2}\text{ for all } 1\leq i\leq 4\}$, where $\{x_{i}, y_{i}\}$ is a standard basis of $\Z^{4}\oplus \Z^{4}$. Therefore, we have the following $\Z$-basis of $T_{G}^{*}$:
\[\{x_{1}-x_{2}, x_{1}-x_{3}, x_{1}-x_{4}, y_{1}-y_{2}, y_{1}-y_{3}, y_{1}-y_{4}, 2x_{1}+2y_{1}, 2x_{1}-2y_{1}\}.\]

Consider the same diagram of the character groups as (\ref{TGHdiagram}) with $\Z^{n}$ and $\Z^{2n}$ replaced by $\Z^{3}\oplus \Z^{3}$ and $\Z^{4}\oplus \Z^{4}$, where the middle map $\Z^{4}\oplus \Z^{4}\to \Z^{3}\oplus \Z^{3}$ is defined by
$x_{i}\mapsto \bar{x}_{i}$, $y_{i}\mapsto \bar{y}_{i}$, $x_{4}\mapsto -\bar{x}_{1}-\bar{x}_{2}-\bar{x}_{3}$, and $y_{4}\mapsto -\bar{y}_{1}-\bar{y}_{2}-\bar{y}_{3}$ for $1\leq i\leq 3$. Then, it follows that we find the following $\Z$-basis of $T_{H}^{*}$:
\begin{equation}\label{thnewbasis}
\{\bar{x}_{1}-\bar{x}_{2}, \bar{x}_{1}-\bar{x}_{3}, \bar{y}_{1}-\bar{y}_{2}, \bar{y}_{1}-\bar{y}_{3}, 2\bar{x}_{1}+2\bar{y}_{1}, 2\bar{x}_{1}-2\bar{y}_{1}\}.
\end{equation}

As the group $S^{2}(\bar{T}_{H}^{*})^{W}$ with $W=(S_{4})^{2}$ is generated by $q_{1}=\sum_{i}\bar{x}_{i}^{2}+\sum_{i<j}\bar{x}_{i}\bar{x}_{j}$ and $q_{2}=\sum_{i}\bar{y}_{i}^{2}+\sum_{i<j}\bar{y}_{i}\bar{y}_{j}$ with $1\leq i\neq j\leq 3$, the same argument as in the proof of Proposition \ref{propsl} together with (\ref{thnewbasis}) shows that $S^{2}(T_{H}^{*})^{W}$ is generated by
\[\{4q_{1}+4q_{2}, 2q_{1}+6q_{2}\}.\]

Since the image of the $2$nd Chern class map is generated by $\{8q_{1}, 4q_{1}+4q_{2}\}$, the indecomposable subgroup of $H$ is isomorphic to $\Z/2\Z$ generated by the class of $2q_{1}+6q_{2}$.
\end{proof}

\begin{remark}
Indeed, in $S^{2}(T_{H}^{*})^{W}$ the element $2q_{1}+6q_{2}$ is written as \[2(\bar{x}_1-\bar{x}_2)\{(\bar{x}_1-\bar{x}_2)+(\bar{x}_1-\bar{x}_3)
-2(\bar{x}_1+\bar{y}_1)-2(\bar{x}_1-\bar{y}_1)\}+12(\bar{x}_1+\bar{y}_1)\{(\bar{x}_1+\bar{y}_1)-(\bar{x}_1 -\bar{y}_1)\}\]
\[+2(\bar{y}_1-\bar{y}_2)\{3(\bar{y}_1-\bar{y}_2)+3(\bar{y}_1-\bar{y}_3)
-6(\bar{x}_1+\bar{y}_1)+6(\bar{x}_1-\bar{y}_1)\}+12(\bar{x}_1-\bar{y}_1)\{(\bar{y}_1-\bar{y}_3)+(\bar{x}_1-\bar{y}_1)\}\]
\[+2(\bar{x}_1-\bar{x}_3)
\{(\bar{x}_1-\bar{x}_3)-2(\bar{x}_1+\bar{y}_1)-2(\bar{x}_1-\bar{y}_1)\}+6(\bar{y}_1-\bar{y}_3)\{(\bar{y}_1-\bar{y}_3)-2(\bar{x}_1+\bar{y}_1)\}.\]

\end{remark}

\begin{lemma}
Let $\bar{X}$ be the generic variety associated to $(\gSL_{4}\times \gSL_{4})/\gmu$, where $\gmu=\{(\lambda_{1}, \lambda_{2})\in (\gmu_{4})^{2}\,|\, \lambda_{1}^{2}\lambda_{2}^{2}=1\}$. Then, the group $\CH^{2}(\bar{X})_{\torsion}$ is trivial.
\end{lemma}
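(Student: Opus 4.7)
The plan is to adapt the method developed in Lemma \ref{fourgeneric} and Proposition \ref{fivegeneric} to the group $H = (\gSL_{4} \times \gSL_{4})/\gmu$, reducing the computation of $\CH^{2}(\bar{X})_{\torsion}$ to a gamma-filtration computation on a product of two $3$-dimensional Severi-Brauer varieties.

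First I would identify the generic $H$-torsor. Using the exact sequence $1 \to \gmu \to (\gSL_{4})^{2} \to H \to 1$ and the connecting map $H^{1}(K, H) \to H^{2}(K, \gmu) \to H^{2}(K, (\gmu_{4})^{2}) = \Br_{4}(K)^{2}$, an $H$-torsor over $K/F$ corresponds to a pair $(A_{1}, A_{2})$ of central simple $K$-algebras of degree $4$ satisfying $2[A_{1}] + 2[A_{2}] = 0$ in $\Br(K)$, reflecting the character $(a, b) \mapsto 2a + 2b$ of $(\gmu_{4})^{2}$ which vanishes on $\gmu$. The generic variety $\bar{X}$ is then a twisted form of the product of two full flag varieties of $\gSL_{4}$ over the function field $K$ of a classifying space of $H$. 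For this versal pair, the indices $\ind(A_{1}^{\tens i_{1}} \tens A_{2}^{\tens i_{2}})$ for $0 \leq i_{1}, i_{2} \leq 3$ should equal $1$ precisely for $(i_{1}, i_{2}) \in \{(0,0), (2,2)\}$ and equal $4$ in the remaining $14$ cases.

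Second, since the twisted flag variety of each $\gSL_{4}$-factor is obtained from $\SB(A_{i})$ by a tower of Grassmann-bundles (themselves towers of projective bundles), repeated application of the projective bundle theorem gives $\CH^{2}(\bar{X})_{\torsion} \cong \CH^{2}(X)_{\torsion}$ where $X = \SB(A_{1}) \times \SB(A_{2})$, a $6$-dimensional variety. Quillen's description (\ref{Quillenbasis}) produces a $\Z$-basis of $K_{0}(X) \subset K_{0}(X_{E}) = \Z[x_{1}, x_{2}]/((x_{1} - 1)^{4}, (x_{2} - 1)^{4})$ for a splitting field $E$, and one computes $|K_{0}(X_{E})/K_{0}(X)| = 4^{14}$.

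Third, after substituting $y_{i} = x_{i} - 1$, I would bound $\epsilon_{n} := |\Gamma^{n/n+1}(X_{E})/\Im(\res^{n/n+1})|$ for $n = 1, \ldots, 6$. Beyond the standard generators $4 x_{1}^{i_{1}} x_{2}^{i_{2}}$, the crucial new ingredient is the exceptional element $z := x_{1}^{2} x_{2}^{2} - 1 \in K_{0}(X) \cap \Gamma^{1}(X)$, whose leading $\Gamma^{1/2}$-term is $2(y_{1} + y_{2})$. Combining $z$, its powers $z^{k}$, products $z \cdot 4 y_{1}^{a} y_{2}^{b}$, and second Chern classes of multi-rank elements (such as $c_{2}(2 x_{1}^{2} x_{2}^{2}) = z^{2}$) should produce enough relations at each graded level to give $\prod_{n=1}^{6} \epsilon_{n} \leq 4^{14}$. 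Applying (\ref{alphalem}) then yields $|\bigoplus_{n=1}^{6} \Gamma^{n/n+1}(X)_{\torsion}| \cdot 4^{14} = \prod_{n=1}^{6} \epsilon_{n}$, so the above bound forces all graded torsion to be trivial, and hence $\CH^{2}(\bar{X})_{\torsion} = \Gamma^{2/3}(\bar{X})_{\torsion} = 0$ by (\ref{CHtwoTop}). The main obstacle will be the Chern-class bookkeeping at the intermediate levels $n = 2, 3, 4$, where the anomalous element $z$ (of coefficient $1$ rather than $4$) must be combined with the standard generators to produce exactly the factor needed to saturate the $4^{14}$ bound, analogous to but more delicate than the argument around $4 y_{i} y_{j} y_{k} \notin \Gamma^{3}$ in Proposition \ref{fivegeneric}.
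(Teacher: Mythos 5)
Your overall strategy is the paper's: reduce to $X=\SB(A_{1})\times\SB(A_{2})$ by the projective bundle theorem, compute $K_{0}(X)$ via Quillen's theorem (\ref{Quillenbasis}), and bound the graded pieces of the gamma filtration using (\ref{alphalem}), with the rank-one element $x_{1}^{2}x_{2}^{2}$ (coming from $\ind(A_{1}^{\tens2}\tens A_{2}^{\tens2})=1$) supplying the extra relation $2(y_{1}+y_{2})\in\Im(\res^{1/2})$. However, there is a concrete error at the very first step: your index profile for the versal pair is wrong. You cannot have $\ind(A_{i}^{\tens 2})=4$: for a division algebra $A$ of degree $4$ one has $\ind(A^{\tens 2})\mid\binom{4}{2}=6$, so $\ind(A_{i}^{\tens 2})=2$. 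In the other direction, the versal pair is as generic as the single constraint $2[A_{1}]+2[A_{2}]=0$ allows, so $A_{1}\tens A_{2}$ is a division algebra of degree $16$ (and exponent $2$), i.e.\ $\ind(A_{1}\tens A_{2})=\ind(A_{1}\tens A_{2}^{\tens3})=\ind(A_{1}^{\tens3}\tens A_{2})=16$, while $\ind(A_{1}\tens A_{2}^{\tens2})=\ind(A_{1}^{\tens2}\tens A_{2})=4$; your uniform value $4$ in all $14$ nontrivial cases is not the generic variety of $H$. Consequently $|K_{0}(X_{E})/K_{0}(X)|=2^{34}$, not $4^{14}=2^{28}$, and your entire numerical target for $\prod\epsilon_{n}$ is off by $2^{6}$; worse, you would be computing the torsion of a different variety, which (\ref{reductionSB}) does not relate to $\bar{X}$ since the two subgroups of $\Br(K)$ have different index profiles.

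The second gap is that the heart of the argument --- the level-by-level bookkeeping --- is only asserted to ``produce enough relations.'' With the correct indices the needed inputs are: $2(y_{1}+y_{2})\in\Im(\res^{1/2})$ and $8y_{1}y_{2}\in\Im(\res^{2/3})$ from $x_{1}^{2}x_{2}^{2}$ (note this element has coefficient $1$, so your $c_{2}(2x_{1}^{2}x_{2}^{2})$ is not the element in $K_{0}(X)$); $c_{2}(4x_{i})=6y_{i}^{2}$; $c_{3}(4x_{i})=4y_{i}^{3}$; $c_{4}$ of $4x_{1}^{2}x_{2}$ and $4x_{1}x_{2}^{2}$ giving $8y_{1}y_{2}^{3},8y_{1}^{3}y_{2}\in\Im(\res^{4/5})$; and products such as $2y_{1}^{2}\cdot 2y_{2}^{2}\in\Gamma^{4}(X)$ and $4y_{1}^{3}\cdot 2y_{2}^{2}\in\Gamma^{5}(X)$. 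These yield $\delta_{n}\le 1$ for $n\ne 5$ with strict gains of $2$ at $n=1,2$ that exactly absorb the slack of $4$ at $n=5$, forcing all graded torsion to vanish. Without carrying out this computation (on the correct $X$), the proof is not complete.
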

\begin{proof}
Let $X=\SB(A_{1})\times \SB(A_{2})$, where $A_{1}$ and $A_{2}$ are division algebras of degree $4$ over a field $K/F$ such that
\[\ind(A_{1}\tens A_{2})=\ind(A_{1}\tens A_{2}^{\tens 3})=\ind(A_{1}^{\tens 3}\tens A_{2})=16,\,\, \ind(A_{i}^{\tens 2})=2,\]
\[\ind(A_{1}\tens A_{2}^{\tens 2})=\ind(A_{1}^{\tens 2}\tens A_{2})=4,\,\, \ind(A_{1}^{2}\tens A_{2}^{2})=1\]
for $i=1, 2$. By (\ref{Quillenbasis}), we have the following basis of $K_{0}(X)$
\begin{equation*}\label{AoneAtwo}
\{1, 4x_{i}, 2x_{i}^{2}, 16x_{1}x_{2}, 4x_{i}^{2}, 4x_{1}^{2}x_{2}, 4x_{1}x_{2}^{2}, x_{1}^{2}x_{2}^{2}, 16x_{1}x_{2}^{3}, 16x_{1}^{3}x_{2}, 4x_{1}^{3}x_{2}^{2}, 4x_{1}^{2}x_{2}^{3}, 16x_{1}^{3}x_{2}^{3}\},
\end{equation*}
where $x_{i}$ is the pullback of the tautological line bundle on $\P^{3}$ over a splitting field $E$ of $X$.

Let $\delta_{n}=|\Gamma^{n/n+1}(X_{E})/\Im(\res^{n/n+1})|/|K^{n}_{0}(X_{E})/K^{n}_{0}(X)|$, where $1\leq n\leq 6$ and $K^{n}(X_{E})$ (respectively, $K^{n}_{0}(X)$) is the codimension $n$ part of $K_{0}(X_{E})$ (respectively, $K_{0}(X)$). We find upper bounds of $\delta_{n}$ using case by case analysis. We use another basis for $K_{0}(X)$ the above basis by replacing $x_{i}$ by $y_{i}:=x_{i}-1$. As $x_{1}^{2}x_{2}^{2}=(y_{1}+1)^{2}(y_{2}+1)^{2}\in \Gamma^{1}(X)$, we have $2(y_{1}+y_{2})\in \Im(\res^{1/2})$, thus $2\delta_{1}\leq 1$.

In codimension $2$, it follows from $2(y_{1}+1)^{2}(y_{2}+1)^{2}\in \Gamma^{2}(X)$ that $8y_{1}y_{2}\in \Im(\res^{2/3})$. Since $c_{2}(4x_{i})=6y_{i}^{2}\in \Gamma^{2}(X)$, we obtain $2\delta_{2}\leq 1$. In codimension $3$, we have $c_{3}(4x_{i})=4y_{i}^{3}\in \Gamma^{3}(X)$ and $2(y_{1}+y_{2})\cdot 2y_{i}^{2}\in \Im(\res^{3/4})$, thus $\delta_{3}\leq 1$. In codimension $4$, we see that $2y_{1}^{2}\cdot 2y_{2}^{2}\in \Gamma^{2}(X)\cdot \Gamma^{2}(X)\subseteq \Gamma^{4}(X)$. Furthermore, it follows by a calculation of $c_{4}(4x_{1}^{2}x_{2})\in \Gamma^{4}(X)$ and $c_{4}(4x_{1}x_{2}^{2})\in \Gamma^{4}(X)$ that we have $8y_{1}y_{2}^{3}$, $8y_{1}^{3}y_{2}\in \Im(\res^{4/5})$. Hence, $\delta_{4}\leq 1$.

In codimension $5$, we get $8y_{1}^{3}y_{2}^{2}=4y_{1}^{3}\cdot 2y_{2}^{2}\in \Gamma^{3}(X)\cdot \Gamma^{2}(X)\subseteq \Gamma^{5}(X)$. Similarly, $8y_{1}^{2}y_{2}^{3}\in \Gamma^{5}(X)$. Therefore, $\delta_{5}\leq 4$. Finally, as $4y_{1}^{3}\cdot 4y_{2}^{3}\in \Gamma^{3}(X)\cdot \Gamma^{3}(X)\subseteq \Gamma^{6}(X)$ we have $\delta_{6}\leq 1$. In conclusion, we obtain $|\!\oplus \Gamma^{n/n+1}(X)_{\torsion}|=1$. In particular, the group $\CH^{2}(X)$ is torsion-free.

Consider the generic variety $\bar{X}$ of $H:=(\gSL_{4}\times \gSL_{4})/\gmu$ over the function field $F(U/B)$, where $U$ is an open subset of a generically free representation of $H$. Since the canonical morphism $\bar{X}\to X$ is an iterated projective bundle, it follows by the projective bundle theorem that $\CH^{2}(\bar{X})_{\torsion}\simeq \CH^{2}(X)_{\torsion}$. Hence, the result follows.\end{proof}

This lemma and Lemma \ref{lemmaforslfour} together with (\ref{mainMNZ}) yield the following result:

\begin{proposition}\label{lastprop}
Let $H$ be a factor group of $\gSL_{4}\times \gSL_{4}$ by a central subgroup $\gmu=\{(\lambda_{1}, \lambda_{2})\in (\gmu_{4})^{2}\,|\, \lambda_{1}^{2}\lambda_{2}^{2}=1\}$. Then, the group $\Inv^{3}(H)_{\ind}$ is cyclic of order $2$ and any degree $3$ normalized invariant of $H$ is semi-decomposable.
\end{proposition}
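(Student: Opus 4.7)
The plan is to deduce the proposition by directly combining the two preceding lemmas with the exact sequence (\ref{mainMNZ}) of Merkurjev--Neshitov--Zainoulline. Since the proposition asserts both an identification of $\Inv^{3}(H)_{\ind}$ and that \emph{every} normalized degree $3$ invariant is semi-decomposable, the key observation is that the map $\Inv^{3}(H)_{\ind} \to \CH^{2}(\bar{X})_{\torsion}$ in (\ref{mainMNZ}) has trivial target in our situation, so its kernel $\Sdec(H)/\Dec(H)$ must coincide with all of $\Inv^{3}(H)_{\ind}$.

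More precisely, I would proceed as follows. First, invoke Lemma \ref{lemmaforslfour} to get $\Inv^{3}(H)_{\ind}\simeq \Z/2\Z$; this settles the first claim of the proposition. Second, invoke the preceding lemma (the $\CH^{2}$ computation for the generic variety $\bar{X}$ of $H$) to conclude $\CH^{2}(\bar{X})_{\torsion}=0$. Now substitute both into the exact sequence
\[
0\to\frac{\Sdec(H)}{\Dec(H)}\to\Inv^{3}(H)_{\ind}\to\CH^{2}(\bar{X})_{\torsion}\to 0
\]
of (\ref{mainMNZ}): the right-hand term vanishes, so the left arrow is an isomorphism, giving $\Sdec(H)/\Dec(H)\simeq \Z/2\Z$. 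Since $\Dec(H)\subseteq \Sdec(H)\subseteq \Inv^{3}(H)_{\norm}$ and the quotient $\Inv^{3}(H)_{\norm}/\Dec(H)=\Inv^{3}(H)_{\ind}$ also has order $2$, we conclude $\Sdec(H)=\Inv^{3}(H)_{\norm}$, i.e., every normalized degree $3$ invariant of $H$ is semi-decomposable.

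There is no real obstacle in this final step: all of the technical work has been absorbed into the two preceding lemmas (the torus/Weyl-invariant computation identifying $\Inv^{3}(H)_{\ind}$, and the gamma-filtration bound establishing triviality of $\CH^{2}(\bar{X})_{\torsion}$). The proof of the proposition is thus essentially a one-line application of (\ref{mainMNZ}).
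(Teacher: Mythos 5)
Your proposal is correct and matches the paper's argument exactly: the paper derives Proposition \ref{lastprop} by combining Lemma \ref{lemmaforslfour} and the $\CH^{2}(\bar{X})_{\torsion}=0$ lemma with the exact sequence (\ref{mainMNZ}), just as you do. The identification $\Sdec(H)/\Dec(H)\simeq\Inv^{3}(H)_{\ind}$ from the vanishing of the right-hand term, and hence $\Sdec(H)=\Inv^{3}(H)_{\norm}$, is precisely the intended one-line deduction.
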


\paragraph{\bf Acknowledgments.}
The author would like to express his deep gratitude to Alexander Merkurjev for introducing the problem and giving helpful suggestions to him. This work was partially supported by an internal fund from KAIST, TJ Park Junior Faculty Fellowship of POSCO TJ Park Foundation, and National Research Foundation of Korea (NRF) funded by the Ministry of Science, ICT and Future Planning (2013R1A1A1010171).

\end{document}